\pgfplotsset{compat=1.18}
\newtheorem{theorem}{Theorem}[section]
\newtheorem{assumption}[theorem]{Assumption}
\newtheorem{lemma}[theorem]{Lemma}
\newtheorem{proposition}[theorem]{Proposition}
\newtheorem{remark}[theorem]{Remark}
\newtheorem{example}[theorem]{Example}
\definecolor{darkgreen}{rgb}{0,0.5,0}
\newcommand\coolover[2]{\mathrlap{\smash{\overbrace{\phantom{%
    \begin{matrix} #2 \end{matrix}}}^{\mbox{$#1$}}}}#2}
\newcommand\coolunder[2]{\mathrlap{\smash{\underbrace{\phantom{%
    \begin{matrix} #2 \end{matrix}}}_{\mbox{$#1$}}}}#2}
\newcommand\coolleftbrace[2]{%
#1\left\{\vphantom{\begin{matrix} #2 \end{matrix}}\right.}
\newcommand\coolrightbrace[2]{%
\left.\vphantom{\begin{matrix} #1 \end{matrix}}\right\}#2}
\pgfplotsset{
  log x ticks with fixed point/.style={
      xticklabel={
        \pgfkeys{/pgf/fpu=true}
        \pgfmathparse{exp(\tick)}%
        \pgfmathprintnumber[fixed relative, precision=3]{\pgfmathresult}
        \pgfkeys{/pgf/fpu=false}
      }
  },
  log y ticks with fixed point/.style={
      yticklabel={
        \pgfkeys{/pgf/fpu=true}
        \pgfmathparse{exp(\tick)}%
        \pgfmathprintnumber[fixed relative, precision=3]{\pgfmathresult}
        \pgfkeys{/pgf/fpu=false}
    }
  }
}
\newcommand{\logLogSlopeTriangle}[5]
{
    \pgfplotsextra
    {
        \pgfkeysgetvalue{/pgfplots/xmin}{\xmin}
        \pgfkeysgetvalue{/pgfplots/xmax}{\xmax}
        \pgfkeysgetvalue{/pgfplots/ymin}{\ymin}
        \pgfkeysgetvalue{/pgfplots/ymax}{\ymax}

        \pgfmathsetmacro{\xArel}{#1}
        \pgfmathsetmacro{\yArel}{#3}
        \pgfmathsetmacro{\xBrel}{#1-#2}
        \pgfmathsetmacro{\yBrel}{\yArel}
        \pgfmathsetmacro{\xCrel}{\xArel}

        \pgfmathsetmacro{\lnxB}{\xmin*(1-(#1-#2))+\xmax*(#1-#2)} 
        \pgfmathsetmacro{\lnxA}{\xmin*(1-#1)+\xmax*#1} 
        \pgfmathsetmacro{\lnyA}{\ymin*(1-#3)+\ymax*#3} 
        \pgfmathsetmacro{\lnyC}{\lnyA+#4*(\lnxA-\lnxB)}
        \pgfmathsetmacro{\yCrel}{(\lnyC-\ymin)/(\ymax-\ymin)} 

        \coordinate (A) at (rel axis cs:\xArel,\yArel);
        \coordinate (B) at (rel axis cs:\xBrel,\yBrel);
        \coordinate (C) at (rel axis cs:\xCrel,\yCrel);

        \draw[#5]   (A)-- node[pos=0.5,anchor=north] {}
                    (B)-- 
                    (C)-- node[pos=0.5,anchor=west,] {\scriptsize$\boldsymbol{#4}$}
                    cycle;
    }
}
\newcommand{\squareasterisk}{%
  \begin{tikzpicture}[baseline=-0.5ex]
    \draw (-0.1,-0.1) rectangle (0.6em, 0.6em);
    \node at (0.06,0.06) {$*$};
  \end{tikzpicture}%
}
\DeclareFontFamily{U}{matha}{\hyphenchar\font45}
\DeclareFontShape{U}{matha}{m}{n}{
<-6> matha5 <6-7> matha6 <7-8> matha7
<8-9> matha8 <9-10> matha9
<10-12> matha10 <12-> matha12
}{}
\DeclareSymbolFont{matha}{U}{matha}{m}{n}
\DeclareFontFamily{U}{mathx}{\hyphenchar\font45}
\DeclareFontShape{U}{mathx}{m}{n}{
<-6> mathx5 <6-7> mathx6 <7-8> mathx7
<8-9> mathx8 <9-10> mathx9
<10-12> mathx10 <12-> mathx12
}{}
\DeclareSymbolFont{mathx}{U}{mathx}{m}{n}
\DeclareMathDelimiter{\vvvert} {0}{matha}{"7E}{mathx}{"17}%
\DeclarePairedDelimiterX{\normiii}[1]
{\vvvert}
{\vvvert}
{\ifblank{#1}{\:\cdot\:}{#1}}
\renewcommand{\phi}{\varphi}
\newcommand{\mi}{\mathrm{i}}
\DeclareMathOperator{\thh}{th}
\newcommand{\B}{\mathbf{B}}
\newcommand{\bx}{\boldsymbol{x}}
\newcommand{\bh}{\boldsymbol{h}}
\newcommand{\C}{\mathbb{C}}
\newcommand{\A}{\mathbf{A}}
\newcommand{\bS}{\mathbf{S}}
\newcommand{\M}{\mathbf{M}}
\newcommand{\K}{\mathbf{K}}
\newcommand{\R}{\mathbb{R}}
\renewcommand{\P}{\mathbb{P}}
\newcommand{\N}{\mathbb{N}}
\newcommand{\bM}{\mathbf{M}}
\newcommand{\bC}{\mathbf{C}}
\newcommand{\Z}{\mathbb{Z}}
\newcommand{\calD}{\mathcal{D}}
\newcommand{\QT}{Q_T}
\newcommand{\nablax}{\nabla_{\bx}}
\newcommand{\hx}{h_{\bx}}
\newcommand{\dpt}{\partial_t}
\newcommand{\tj}{t_j}
\newcommand{\tjmo}{t_{j-1}}
\newcommand{\Sht}[2]{S_{h_t}^{#1}}
\newcommand{\Shx}{S_{h_{\boldsymbol{x}}}}
\newcommand{\wht}{w_{h_t}^p}
\newcommand{\Uhp}{\Psi_{\boldsymbol{h}}^p}
\newcommand{\Whpmo}{W_{\boldsymbol{h}}^{p-1}}
\newcommand{\calE}{\mathcal{E}}
\newcommand{\calM}{\mathcal{M}}
\newcommand{\conj}[1]{\overline{#1}}
\newcommand{\Norm}[2]{\|#1\|_{#2}}
\newcommand{\eremk}{\hbox{}\hfill\rule{0.8ex}{0.8ex}}
\numberwithin{equation}{section}
\title{A matrix-based approach to the stability of a space--time \\ isogeometric method for the linear Schr\"odinger equation}
\author{Matteo~Ferrari\,\orcidlink{0000-0002-2577-1421}\thanks{Faculty of Mathematics, Universit\"at Wien, Oskar-Morgenstern-Platz 1, 1090 Vienna, Austria (\href{mailto:matteo.ferrari@univie.ac.at}{matteo.ferrari@univie.ac.at}).}  \and Sergio G\'omez\,\orcidlink{0000-0001-9156-5135}\thanks{Department of Mathematics and Applications, University of Milano-Bicocca, Via Cozzi 55, 20125 Milan, Italy (\href{mailto:sergio.gomezmacias@unimib.it}{sergio.gomezmacias@unimib.it}).}
\thanks{IMATI-CNR ``E. Magenes", Via Ferrata 5, 27100 Pavia, Italy} }
\date{}
\begin{document}
\maketitle

\begin{abstract}
\noindent 
We propose a space--time isogeometric finite element method for the linear Schr\"odinger equation, and establish its unconditional stability through a matrix-based analysis. Although maximal-regularity splines in time provide higher accuracy per degree of freedom compared to piecewise continuous polynomials, the nonlocal support of the spline bases precludes the use of standard variational arguments in the stability proofs. To overcome this, we show that the resulting scheme is governed by a family of nearly Toeplitz system matrices and, by studying the condition number of these matrices, we prove that the family is weakly well-conditioned, which guarantees the unconditional stability of the method. Furthermore, the discrete scheme preserves mass and energy at the final time. Numerical experiments confirm our theoretical findings and illustrate the optimal convergence behavior of the scheme. Finally, we exploit an algebraic connection between our formulation and a recent first-order-in-time space--time isogeometric method for the wave equation to derive a complete matrix-based stability analysis for the latter.
\end{abstract}
\paragraph{Keywords.} Schrödinger equation; space--time methods; B-splines; isogeometric analysis; Toeplitz matrices.

\paragraph{Mathematics Subject Classification.} 35Q41, 65M60, 15A12, 15B05.

\section{Introduction}

In this paper, we focus on the dimensionless linear time-dependent Schr\"odinger equation in $d$ space dimensions with a spatially varying potential. Let~$\QT$ be a space--time cylinder given by~$\Omega \times J_T$, where~$\Omega \subset \R^d$ ($d \in \N)$ is an open, bounded domain with Lipschitz boundary~$\partial \Omega$, and~$J_T := (0, T)$, for some final time~$T > 0$. Given a real-valued potential~$V : \Omega \rightarrow \R$, an initial datum $\Psi_0 : \Omega \to \mathbb{C}$, and an external forcing term~$F : \QT \rightarrow \C$, we consider the following linear Schr\"odinger initial and boundary value problem (IBVP): find~$\Psi : \overline{Q}_T \to \C$ such that
\begin{equation}
\label{eq:1.1}
    \begin{cases}
        \mi \dpt \Psi(\bx,t) + \frac{1}{2} \Delta_{\bx} \Psi (\bx,t) + V(\bx) \Psi (\bx,t) = F(\bx,t) & (\bx,t) \in \QT,
        \\ \Psi(\bx,t) = 0 & (\bx,t) \in \partial \Omega \times J_T,
        \\ \Psi(\bx,0) = \Psi_0(\bx) & \bx \in \Omega.
    \end{cases}
\end{equation}
Above, $\mi$ is the imaginary unit, and~$\Delta_{\bx}$ denotes the spatial Laplacian operator.

Model~\eqref{eq:1.1} may describe the evolution of a quantum wave function~$\Psi$ under the influence of a spatially varying potential~$V$.  In particular, this equation governs the behavior of a nonrelativistic quantum particle in~$d$ space dimensions, where the term~$\frac{1}{2} \Delta_{\bx} \Psi$ represents the kinetic energy contribution, and~$V\Psi$ accounts for the interaction with the potential field.

\paragraph{Previous works.}
Space--time finite element methods (FEM) treat time as an additional space dimension in time-dependent PDEs. These methods offer several advantages, as they generally allow for high-order approximations in both space and time, can be formulated in a variational setting that closely resembles the continuous one, and can be naturally combined with efficient solvers. 

The design of space--time methods for the Schr\"odinger equation has received significant attention in recent years.  In~\cite{KarakashianMakridakis1998}, the cubic nonlinear Schr\"odinger equation was discretized by combining conforming FEM space discretizations with the discontinuous Galerkin (dG) time-stepping scheme. The continuous Petrov--Galerkin time discretization in~\cite{AzizMonk1989} for the heat equation was also later studied in~\cite{KarakashianMakridakis1999} for the cubic nonlinear Schr\"odinger equation. The more refined analysis presented in~\cite{DodingHenning2024} of the method in~\cite{KarakashianMakridakis1999} is valid for two- and three-dimensional space domains, without requiring artificial restrictions on the time steps. A recent approach based on a conforming space--time Galerkin discretization is shown to be effective in \cite{Zank2025}. A space--time discontinuous Petrov--Galerkin method for the linear Schr\"odinger equation was proposed in~\cite{DemkowiczGopalakrishnanNagarajSepulveda2017}, which is based on an ultraweak formulation with test space chosen to ensure \emph{inf-sup stability}. Further space--time methods based on ultraweak formulations include spline-based~\cite{HainUrban2024}, least-squares \cite{BressanKushovaSangalliTani2023}, and discontinuous Galerkin \cite{Gomez_Moiola:2022,Gomez_Moiola:2024,Gomez_Moiola_Perugia_Stocker:2023} methods.

This work focuses on conforming space--time finite element methods where the time variable is discretized using splines with maximal regularity. The use of such spaces allows us to obtain approximations with accuracy comparable to methods based on standard polynomial spaces but with fewer degrees of freedom \cite{EvansBazilevsBabuskaHughes2009, BressanSande2019, SandeManniSpeleers2019}. However, employing maximal-regularity splines introduces significant challenges in the stability analysis, as standard variational arguments (see, e.g., the recent review~\cite{Gomez:2026b}) cannot be applied. This difficulty primarily arises from the extended support resulting from their high regularity, which differ from the localized nature of standard piecewise continuous polynomials. 
To overcome this limitation, we carry out a matrix-based stability analysis that exploits the nearly Toeplitz structure of the resulting temporal part of the discrete system. This algebraic structure is a direct consequence of employing B-splines over a uniform temporal grid. 

\paragraph{Novelties.}
In this paper, we propose and analyze a space--time FEM to approximate the solution to the linear Schr\"odinger IBVP~\eqref{eq:1.1}. The proposed method combines a Petrov--Galerkin scheme in time based on splines with maximal regularity with $H_0^1(\Omega)$-conforming space discretizations. 
Moreover, we show that the proposed method conserves mass and energy at the final time for any spatial discretization. In the spirit of~\cite{KarakashianMakridakis1999}, the discrete test space is constructed by taking the temporal derivative of the trial space. However, while the stability analysis in~\cite{KarakashianMakridakis1999,DodingHenning2024} relied heavily on the specific properties of discontinuous polynomials, such variational techniques are not applicable in our setting due to the nonlocal support of B-splines. Therefore, along the lines of~\cite{FerrariFraschiniLoliPerugia2025, FerrariFraschini2026}, we address the stability analysis by studying the condition number of the family of matrices stemming from the time discretization. 
This analysis combines two main tools: properties of the symbol of spline-based discretization matrices~\cite{GaroniManniPelosiSerraCapizzanoSpeleers2014,Donatelli2016,EkstromFurciGaroniManniSerraCapizzanoSpeleers2018,Garoni2019}, and the behavior of the condition number of general Toeplitz
band matrices characterized in~\cite{AmodioBrugnano1996}. Our main theoretical advancements are the following. 

\begin{itemize}

\item We show unconditional stability of the proposed method (see Theorem~\ref{th:3.21}). To achieve this, we exploit the generalized eigenvalue problem associated with the discrete spatial Hamiltonian. By expanding the solution in terms of the corresponding eigenfunctions, we decouple the fully discrete formulation into a set of independent temporal ODEs, which are then analyzed through our matrix-based approach.

\item We establish an algebraic connection between this method and the first-order-in-time variational formulation for the wave equation presented and analyzed in~\cite{FerrariFraschiniLoliPerugia2025} (see Section~\ref{sec:4}). This connection allows us to provide a proof of unconditional stability for the latter method, thereby eliminating the need for the numerical verifications required in the previous analysis reported in \cite{FerrariFraschiniLoliPerugia2025}.
\end{itemize}

\paragraph{Structure of the remainder of the paper.} 
The rest of the paper is organized as follows. In the following section, we introduce the continuous weak formulation of the linear Schrödinger IBVP and define the proposed space--time Galerkin method based on splines with maximal temporal regularity. We also prove that the discrete scheme preserves mass and energy at the final time. In Section~\ref{sec:3}, we carry out a stability analysis of the fully discrete method. To this end, we study a related variational formulation for a time-dependent scalar problem and analyze the structure and conditioning of the resulting discrete system. We show that the associated system matrices are nearly Toeplitz and, by analyzing their corresponding polynomials symbol, we prove the unconditional stability of the method. In Section~\ref{sec:4}, we highlight an algebraic connection between our variational formulation for the Schr\"odinger equation and that of a first-order-in-time isogeometric method for the wave equation. We thus provide a new, purely theoretical proof of unconditional stability for the method introduced in \cite{FerrariFraschiniLoliPerugia2025}, thereby removing the need for numerical verification. Finally, in Section~\ref{sec:5}, we present numerical experiments that validate our theoretical results. We describe an efficient implementation of the solver based on the \textit{Bartels--Stewart method}, demonstrate optimal convergence rates in space and time, and illustrate the conservation of physical invariants.

\paragraph{Notation for function spaces.} 
Here and throughout the paper, we use standard notation for~$L^p$, Sobolev, and Bochner spaces, which can be found, e.g., in~\cite{Brezis2010}. In particular, given a bounded domain~$\calD \subset \R^n$ ($n \in \N$), $L^2(\calD)$ denotes the space of complex-valued functions that are Lebesgue square integrable over~$\calD$. The associated sesquilinear inner product~$(u,v)_\calD := \int_\calD u(\bx) \overline{v}(\bx) \dd \bx$, with $\overline{v}$ denoting the complex conjugate of~$v$, induces the $L^2(\calD)$ norm $\| u \|^2_{\calD} := (u,u)_\calD$. Given~$s > 0$, we denote by~$H^s(\calD)$ the Sobolev space of order~$s$, by~$H_0^1(\calD)$ the closure of~$C_0^{\infty}(\calD)$ in the~$H^1(\calD)$ norm, and by~$H^{-1}(\calD)$ the dual space of~$H_0^1(\calD)$.

Given a time interval~$(a, b)$ and a Banach space~$(X, \Norm{\cdot}{X})$, we denote by~$L^2(a, b; X)$ the corresponding Bochner space with norm
\begin{equation*}
    \Norm{v}{L^2(a, b; X)}^2 := \int_a^b \Norm{v(\cdot, t)}{X}^2 \dd t.
\end{equation*}
Moreover, we define
\begin{equation*}
    H^1(a, b; X) := \big\{v \in L^2(a, b; X) \ : \  \dpt v \in L^2(a, b; X) \big\}.
\end{equation*}

\section{Continuous weak formulation and space--time method}
Let in \eqref{eq:1.1} the forcing term~$F \in L^2(\QT) \cap H^1(0, T; H^{-1}(\Omega))$, the real-valued potential~$V \in L^{\infty}(\Omega)$, and the initial condition~$\Psi_0 \in H_0^1(\Omega)$. The continuous weak formulation of~\eqref{eq:1.1} reads: find~$\Psi \in C^0([0,T]; H_0^1(\Omega))$ with~$\dpt \Psi \in C^{0}([0, T]; H^{-1}(\Omega))$ such that~$\Psi(\cdot, 0) = \Psi_0$ in~$H_0^1(\Omega)$ and
\begin{equation} \label{eq:2.1}
\begin{split}
        \int_0^T \Big(\mi \langle \dpt \Psi(\cdot,t),W(\cdot,t) \rangle & - \frac{1}{2} (\nabla_{\bx} \Psi(\cdot,t), \nabla_{\bx} W(\cdot,t))_{\Omega} + (V \Psi(\cdot,t), W(\cdot,t))_{\Omega} \Big) \dd t 
        \\ & = \int_{0}^T (F(\cdot,t), W(\cdot,t))_{\Omega} \, \dd t  \qquad  \quad \text{for all~} W \in L^2(0, T; H^1_{0}(\Omega)),
\end{split}
\end{equation}
where~$\nablax$ is the spatial gradient operator, and~$\langle \cdot, \cdot \rangle$ denotes the duality between~$H^{-1}(\Omega)$ and~$H_0^1(\Omega)$. According to~\cite[Thm.~10.1 and Rem.~10.2 in Ch.~3]{Lions-MagenesVol1:1972}, there exists a unique solution to the continuous weak formulation~\eqref{eq:2.1}.

We define the energy functional
\begin{equation} \label{eq:2.2}
    \mathcal{E}_{\Psi}(t) := -\frac{1}{2} \int_\Omega |\nabla_{\bx} \Psi(\bx,t)|^2 \, \dd \bx + \int_\Omega V(\bx) | \Psi(\bx,t)|^2  \, \dd \bx,
\end{equation}
and the mass functional
\begin{equation} \label{eq:2.3}
    \mathcal{M}_{\Psi}(t) := \int_\Omega |{\Psi}(\bx,t)|^2  \, \dd \bx.
\end{equation}
\begin{remark}[Conservation of invariants] \label{rem:21}
Assuming that~$F \in H^1(0, T; L^2(\Omega))$ and~$\partial \Omega$ is sufficiently smooth, the solution~${\Psi}$ to~\eqref{eq:2.1} belongs to the space~$L^2(0, T; H_0^1(\Omega) \cap H^2(\Omega)) \cap H^1(0, T; L^2(\Omega))$; see~\cite[Thm.~12.1 in Ch.~5]{Lions-MagenesVol2:1972}. Then, we can choose~$W$ in the weak formulation~\eqref{eq:2.1} as~$W(\bx,s) = \partial_s {\Psi}(\bx,s) \mathds{1}_{[0,t]}(s)$, where~$\mathds{1}_{[0,t]}$ is the characteristic function on~$[0,t]$, and take the real part of the resulting equation to obtain
\begin{align*}
    \calE_{\Psi}(t) & = \calE_{\Psi}(0) + 2 \Re \int_0^t F(\bx,s) \partial_s \conj{{\Psi}}(\bx,s)\, \dd \bx \, \dd s \quad \text{~for all~} t \in [0,T].
\end{align*}
Similarly, choosing~$W$ in~\eqref{eq:2.1} as $W(\bx,s) = {\Psi}(\bx,s) \mathds{1}_{[0,t]}(s)$, taking the imaginary part of the resulting equation, and using the identity
\begin{equation} \label{eq:2.4}
    \Re\, (\dpt {\Psi}(\cdot,t), {\Psi}(\cdot,t))_\Omega = \frac{1}{2} \int_\Omega | {\Psi}(\bx,t) |^2 \dd \bx - \frac{1}{2} \int_\Omega | {\Psi}_0(\bx) |^2 \dd \bx
\end{equation}
leads to
\begin{align*}
    \calM_{\Psi}(t) & = \calM_{\Psi}(0) + 2 \Im \int_0^t F(\bx,s)  \conj{{\Psi}}(\bx,s)\,  \dd \bx \, \dd s\quad \text{~for all~} t \in [0,T].
\end{align*}
As a consequence, for~$F \equiv 0$, the energy~\eqref{eq:2.2} and the mass~\eqref{eq:2.3} functionals are conserved, i.e., 
\begin{equation*}
    \frac{\dd}{\dd t} \calE_{\Psi}(t) = 0 \quad \text{ and } \quad \frac{\dd}{\dd t} \calM_{\Psi}(t) = 0.
\end{equation*}
It is desired to reproduce this property at the discrete level.
\eremk
\end{remark}

\subsection{Definition of the space--time method} \label{sec:21}
Given~$N_t \in \N$, we define a uniform mesh $\{ t_j = jh_t \, : \, j = 0, \ldots, N_t \}$ for the time interval~$(0, T)$, with meshsize~$h_t = T/N_t$.  Moreover, for~$j = 1, \ldots, N_t$, we define~$I_j := (\tjmo, \tj)$. We consider the space of maximal regularity splines of degree $p \geq 1$ and regularity $p - 1$ with values in the field $\mathbb{K}$, defined by
\begin{equation*}
    S_{h_t}^p(0,T;\mathbb{K}) := \{ \wht \in C^{p-1}([0,T]) \ : \ \wht {}_{|_{I_j}} \in \P_p(I_j; \mathbb{K}), \, j = 1,\ldots N_t\},
\end{equation*}
where~$\P_p(I_j; \mathbb{K})$ is the space of $\mathbb{K}$-valued polynomials of degree~$p$ defined on~$I_j$. This space satisfies 
\begin{equation*}
    \dim_{\mathbb{K}}(S_{h_t}^p(0,T;\mathbb{K})) = N_t + p.
\end{equation*}
We also define the following subspaces with null initial and final conditions, respectively,
\begin{equation} \label{eq:2.5}
\begin{aligned}
    S_{h_t,0,\bullet}^p(0,T;\mathbb{K}) & = \big\{ \wht \in S_{h_t}^p(0,T;\mathbb{K}) : \wht (0) = 0\big\}, 
    \\ S_{h_t,\bullet,0}^p(0,T;\mathbb{K}) & = \big\{ \wht \in S_{h_t}^p(0,T;\mathbb{K}): \wht(T) = 0\big\}.
\end{aligned}
\end{equation}
Let~$\Shx(\Omega)$  be a discrete conforming subspace of~$H_0^1(\Omega)$, and~$\Pi_{\hx} {\Psi}_0 \in \Shx(\Omega)$ be an approximation of~${\Psi}_0$.

\begin{remark}[Choice of the discrete spaces]
While the proposed method offers great flexibility in the choice of the spatial discretization,   as it allows for any conforming discrete space and can be easily generalized to accommodate nonconforming spatial discretizations, the choice of the discrete spaces for the temporal discretization is more delicate and has been chosen carefully. This distinction arises from the analytical difficulties  inherent to the treatment of the time variable in this context. Indeed, developing a continuous space--time variational formulation for the Schr\"odinger equation that yields unconditionally stable schemes for arbitrary choices of discrete temporal spaces remains an open problem. \eremk
\end{remark}
Denoting by~$\otimes$ the algebraic tensor product of vector spaces, the proposed method, which is a discrete counterpart of the continuous weak formulation~\eqref{eq:2.1}, reads:
\begin{tcolorbox}[
    colframe=black!50!white,
    colback=blue!5!white,
    boxrule=0.5mm,
    sharp corners,
    boxsep=0.5mm,
    top=0.5mm,
    bottom=0.5mm,
    right=0.25mm,
    left=0.1mm
]
    \begingroup
    \setlength{\abovedisplayskip}{0pt}
    \setlength{\belowdisplayskip}{0pt}
    \begin{equation} \label{eq:2.6}
    \begin{aligned}
    &\hspace{-1.75cm}\text{find}~\Uhp \in  \Pi_{\hx} \Psi_0 + S_{h_t,0,\bullet}^p(0,T;\C) \otimes \Shx(\Omega)~\text{such that} \vspace{0.1cm}
    \\ & \mi (\dpt \Uhp, \Whpmo)_{\QT} - \frac{1}{2} (\nablax \Uhp, \nablax \Whpmo)_{\QT} + (V \Uhp, \Whpmo)_{\QT} = (F, \Whpmo)_{\QT}
    \vspace{0.1cm}
    \\ & \hspace{-1.8cm}\text{~for all~} \Whpmo \in \Sht{p-1}{p-2}(0,T;\C) \otimes \Shx(\Omega).
    \end{aligned}
    \end{equation}
    \endgroup
\end{tcolorbox}
As mentioned before, method~\eqref{eq:2.6} is related to the one introduced by Karakashian and Makridakis in~\cite{KarakashianMakridakis1999}, where continuous piecewise polynomials of degree~$p$ (resp. piecewise polynomials of degree~$p - 1$) in time are used for the trial (resp. test) space. The method in \cite{KarakashianMakridakis1999} conserves the energy~\eqref{eq:2.2} and the mass~\eqref{eq:2.3} functionals at all discrete times. For our method, the extended support of the B-splines used as test functions prevents us from establishing this conservation property at all intermediate discrete time steps. Nevertheless, in Proposition~\ref{prop:2.3}, we show that method~\eqref{eq:2.6} conserves the energy and the mass at the final time~$T$.
\begin{proposition}[Conservation properties of~\eqref{eq:2.6}]
\label{prop:2.3}
Let~$F \equiv 0$ and~$\Uhp$ be a solution to method~\eqref{eq:2.6}. Then, it holds
\begin{equation*}
    \calM_{\Uhp}(T) = \calM_{\Uhp}(0) \qquad \text{ and } \qquad \calE_{\Uhp}(T) = \calE_{\Uhp}(0).
\end{equation*}
\end{proposition}
\begin{proof}
For the conservation of the mass, we choose~$\Whpmo = \Pi_{h_t}^{p-1} \Uhp$ in~\eqref{eq:2.6}, where~$\Pi_{h_t}^{p-1}$ is the $L^2(0, T)$-orthogonal projection into~$S_{h_t}^{p-1}(0,T;\C)$. Using the orthogonality properties of~$\Pi_{h_t}^{p-1}$, we obtain
\begin{align*}
        0 & = \mi (\dpt \Uhp, \Pi_{h_t}^{p-1} \Uhp)_{\QT} - \frac{1}{2} (\nablax \Uhp, \nablax \Pi_{h_t}^{p-1} \Uhp)_{\QT} + (V \Uhp, \Pi_{h_t}^{p-1} \Uhp)_{\QT}
        \\ &  = \mi (\dpt \Uhp, \Uhp)_{\QT} - \frac{1}{2} \|\nablax \Pi_{h_t}^{p-1} \Uhp\|^2_{L^2(\QT)} + (V \Pi_{h_t}^{p-1} \Uhp, \Pi_{h_t}^{p-1} \Uhp)_{\QT}.
\end{align*}
Taking the imaginary part of the above equation and using identity~\eqref{eq:2.4} 
yields the conservation of mass. The conservation of the energy readily follows by taking~$\Whpmo = \dpt \Uhp$ in~\eqref{eq:2.6} and proceeding as for the continuous case (see Remark~\ref{rem:21}).
\end{proof}

\section{Stability analysis} \label{sec:3}
In this section, we study the stability properties of the space--time method~\eqref{eq:2.6}. To do so, we first consider the corresponding discretization of the ODEs derived from the Fourier expansion of the  continuous solution to~\eqref{eq:2.1}. Then, the stability analysis for the space--time method~\eqref{eq:2.6} follows in an analogous way; see Remark~\ref{rem:32} below for more details.

Henceforth, we assume that the spatial domain~$\Omega$ is convex, $F \in H^1(0,T;L^2(\Omega))$, and  $\Psi_0 \in H^2(\Omega) \cap H_0^1(\Omega)$ to guarantee that the solution~$\Psi$ to \eqref{eq:2.1} belongs to~$L^2(0,T;H_0^1(\Omega)) \cap H^1(0,T;L^2(\Omega))$.

We also define the following Sobolev spaces:
\begin{equation*}
\begin{split}
    H_{0, \bullet}^1(0, T) & := \{v \in H^1(0, T) \, : \, v(0) = 0\}, 
    \\ H_{\bullet, 0}^1 (0, T) & := \{v \in H^1(0, T) \, : \, v(T) = 0 \}. 
    \end{split}
\end{equation*}

\subsection{Variational formulation of the associated initial value problem}

Consider the eigenvalue problem
\begin{equation*}
    \begin{cases}   
        \frac{1}{2} \Delta \Phi (\bx) + V(\bx) \Phi (\bx) = \mu \Phi(\bx) & \bx \in \Omega,
        \\ \Phi(\bx) = 0 & \bx \in \partial \Omega.
    \end{cases}
\end{equation*}
Due to the assumption~$V \in L^\infty(\Omega)$, according to~\cite[Rem.~30 in Ch.~9.8]{Brezis2010}, there exists a Hilbert basis of $L^2(\Omega)$ composed of eigenfunctions~$\{\Phi_j(\bx)\}_{j \ge 1}$ associated with real eigenvalues~$\{\mu_j \}_{j \ge 1}$ satisfying~$\mu_j \to +\infty$ as~$j \to +\infty$. Therefore, any solution to~\eqref{eq:2.1} admits the representation
\begin{equation*}
    \Psi(\bx,t) = \sum_{j=1}^\infty \psi_j(t) \Phi_j(\bx),
\end{equation*}
for some coefficients~$\{\psi_j\}_{j \geq 1} \subset H^1(0,T)$ satisfying
\begin{equation*}
    \begin{cases}
        \mi (\dpt \psi_j, w)_{J_T} + \mu_j ( \psi_j, w)_{J_T} = (f_j, w)_{J_T} \quad \text{for all~} w \in L^2(0,T), \\
        \psi_j(0) = \Psi_0^j,
    \end{cases}
\end{equation*}
where $f_j(t) := (F(\cdot,t), \psi_j)_\Omega$ and $\Psi_0^j := (\Psi_0, \psi_j)_\Omega$. Consequently, without loss of generality, we consider the following initial value problem (IVP): given~$f \in L^2(0,T)$, $\mu \in \R$, and~$\psi_0 \in \C$, find $\psi \in H^1(0,T)$ such that
\begin{equation} \label{eq:3.1}
    \begin{cases}
        \mi \dpt \psi(t) + \mu \psi(t) = f(t) & t \in (0,T), \\
        \psi(0) = \psi_0.
    \end{cases}
\end{equation}
To study the stability of the space--time method~\eqref{eq:2.6}, we focus on the corresponding discrete variational formulation associated with the IVP~\eqref{eq:3.1}, and analyze its stability with respect to the real parameter~$\mu$.

As in Section \ref{sec:21}, for a given~$N_{t} \in \N$, we define a uniform mesh $\{ t_j = j{h_t}\ : \ j = 0, \ldots, N_{t} \}$ for the time interval~$(0, T)$, with meshsize~${h_t} = T/N_{t}$. The numerical discretization of the IVP~\eqref{eq:3.1}
reads:
\begin{tcolorbox}[
    colframe=black!50!white,
    colback=blue!5!white,
    boxrule=0.5mm,
    sharp corners,
    boxsep=0.5mm,
    top=0.5mm,
    bottom=0.5mm,
    right=0.25mm,
    left=0.1mm
]
    \begingroup
    \setlength{\abovedisplayskip}{0pt}
    \setlength{\belowdisplayskip}{0pt}
    \begin{equation} \label{eq:3.2}
    \begin{aligned}
    &\hspace{-1.75cm}\text{find}~\psi^p_{h_t} \in  \psi_0 + S_{{h_t},0,\bullet}^p(0,T;\C)~\text{such that} \vspace{0.1cm}
    \\ & \mi (\dpt \psi^p_{h_t}, w^{p-1}_{h_t})_{J_T} + \mu (\psi^p_{h_t}, w^{p-1}_{h_t})_{J_T} = (f, w^{p-1}_{h_t})_{J_T} \quad \text{~for all~} w^{p-1}_{h_t} \in S^p_{h_t}(0,T;\C).
    \end{aligned}
    \end{equation}
    \endgroup
\end{tcolorbox}
\begin{remark}[Equivalent formulation] \label{rem:31}
Since $\dpt : S_{{h_t},\bullet,0}^p(0,T;\C) \to S^{p-1}_{h_t}(0,T;\C)$ is an isomorphism, integration by parts and the definition of the spaces~$S^p_{{h_t}, \bullet, 0}(0, T; \C)$ and~$S^p_{{h_t}, 0, \bullet}(0, T; \C)$ can be used to show that the discrete formulation \eqref{eq:3.2} is equivalent to: find~$\psi^p_{h_t} \in \psi_0 + S_{{h_t},0,\bullet}^p(0,T;\C)$ such that
\begin{equation*} \label{eq:11}
        \mi (\dpt \psi^p_{h_t}, \dpt w^p_{h_t})_{J_T} - \mu (\dpt \psi^p_{h_t}, w^p_{h_t})_{J_T} = (f, \dpt w^p_{h_t})_{J_T} + \mu \psi_0 w^p_{h_t}(0) \quad\quad \text{for all}~w^p_{h_t} \in S_{{h_t},\bullet,0}^p(0,T;\C).
\end{equation*}
\eremk
\end{remark}
\begin{remark}[Stability of the space--time method~\eqref{eq:2.6}]
\label{rem:32}
The theoretical results in this section can be easily adapted to the fully discrete setting in~\eqref{eq:2.6}. The reasoning is the following.
\begin{itemize}
\item Given a basis for the discrete space~$\Shx(\Omega)$, the conforming space discretization of the Hamiltonian operator~$(\frac12 \Delta_{\bx} + V)$ leads to the generalized eigenvalue problem: find pairs~$(\mu_{h_{\bx}}, \Phi_{h_{\bx}}) \in \C \times \Shx(\Omega)$ such that
\begin{equation*}
    \mathbf{A}_{h_{\bx}} \boldsymbol{\Phi}_{h_{\bx}} =  \mu_{h_{\bx}} \M_{h_{\bx}} \boldsymbol{\Phi}_{h_{\bx}},
\end{equation*}
where~$\mathbf{A}_{h_{\bx}}$ and~$\bM_{h_{\bx}}$ are the matrices representing, respectively, the Hamiltonian operator and the~$L^2(\Omega)$ inner product, and~$\boldsymbol{\Phi}_{h_{\bx}}$ is the vector representation of~$\Phi_{h_{\bx}}$ in the given basis.
\item Let $N_{\bx} := \dim_{\R} S_{h_{\bx}}(\Omega)$. Since~$\mathbf{A}_{h_{\bx}} \in \R^{N_{\bx} \times N_{\bx}}$ and~$\bM_{h_{\bx}} \in \R^{N_{\bx} \times N_{\bx}}$ are real symmetric matrices, and~$\bM_{h_{\bx}}$ is positive definite, the eigenvalues~$\mu_{h_{\bx}}$ are real. Moreover, the corresponding discrete eigenfunctions~$\{\Phi_{j,h_{\bx}}\}_{j=1}^{N_{\bx}}$ form a basis of~$\Shx(\Omega)$ orthogonal with respect to the~$L^2(\Omega)$ inner product.
\item Consequently, the discrete solution to~\eqref{eq:2.6} can be written as
\begin{equation*}
    \Uhp = \sum_{j=1}^{N_{\bx}} \psi_{j, h_t}(t) \Phi_{j, h_{\bx}}(\bx),
\end{equation*}
for some coefficients~$\psi_{j, h_t} \in \Sht{p}{p-1}(0, T)$.
The space--time method~\eqref{eq:2.6} then reduces to solving a set of independent problems similar to those in~\eqref{eq:3.2}.
\end{itemize}
\eremk
\end{remark}
Before focusing on stability, we first investigate the uniqueness of the solution to~\eqref{eq:3.2}. Due to the limitations of the matrix approach described in Section~\ref{sec:3.2} below, it is currently not possible to show the solvability of the linear system stemming from~\eqref{eq:3.2}. Consequently, a discrete variational argument seems to be required.

At the \textit{continuous} level, the uniqueness of the solution to the variational formulation of  \eqref{eq:3.1} holds, as shown in the following result.
\begin{proposition} \label{prop:3.3}
Given~$f \in L^2(0, T)$, $\mu \in \R,$ and~$\psi_0 \in \C$, the following variational problem has a unique solution: find~$\psi \in \psi_0 + H^1_{0,\bullet}(0,T)$ such that
\begin{equation} \label{eq:3.3}
        \mi (\dpt \psi, w)_{J_T} + \mu (\psi, w)_{J_T} = (f, w)_{J_T} \quad \text{for all~} w \in L^2(0,T).
\end{equation}
\end{proposition}
\begin{proof}
We can show that
\begin{equation*}
\psi(t) = e^{-\mi \mu t} \psi_0 + e^{-\mi \mu t} \int_0^t e^{\mi \mu s} f(s) \dd s
\end{equation*}
solves~\eqref{eq:3.1} and, due to the regularity assumption on~$f$, it is also a solution to~\eqref{eq:3.3}.

To show uniqueness, suppose that $\psi \in H^1_{0,\bullet}(0,T)$ satisfies the homogeneous problem
\begin{equation} \label{eq:3.4}
        \mi (\dpt \psi, w)_{J_T} + \mu (\psi, w)_{J_T} = 0 \quad \text{for all~} w \in L^2(0,T).
\end{equation}
We test~\eqref{eq:3.4} with~$w(t) = e^{-t/T} \dpt \psi(t)$. It can be easily seen that
\begin{equation} \label{eq:3.5}
    (\dpt \psi, e^{-\bullet/T} \dpt \psi)_{J_T} = \int_0^T \partial_s \psi(s) \partial_s \overline{\psi}(s) e^{-s/T} \dd s = \int_0^T |\partial_s \psi(s)|^2 e^{-s/T} \dd s.
\end{equation}
Using the identity 
\begin{equation} \label{eq:3.6}
    \text{Re}(\psi \dpt \overline{\psi}) = \frac{1}{2} \dpt |\psi|^2
\end{equation}
and integration by parts, we also obtain
\begin{equation} \label{eq:3.7}
    \text{Re}(\psi, e^{-\bullet/T} \dpt \psi)_{J_T} = \int_0^T \text{Re}(\psi(s) \partial_s \overline{\psi}(s)) e^{-s/T} \dd s = \frac{1}{2T} \int_0^T |\psi(s)|^2 e^{-s/T} \dd s + \frac{1}{2e} |\psi(T)|^2.
\end{equation}
Inserting \eqref{eq:3.5} and \eqref{eq:3.7} in \eqref{eq:3.4}, we conclude  
\begin{equation*} 
    0 = \text{Re}(\mi (\dpt \psi, e^{-\bullet/T} \dpt \psi)_{J_T} + \mu (\psi, e^{-\bullet/T} \dpt \psi)_{J_T}) = 
     \frac{\mu}{2T} \int_0^T |\psi(s)|^2 e^{-s/T} \dd s +  \frac{\mu}{2e} |\psi(T)|^2,
\end{equation*}
from which we deduce $\psi \equiv 0$.
\end{proof}
Unfortunately, the argument used in Proposition~\ref{prop:3.3} does not extend to the \emph{discrete} level, as the test function~$e^{-t/T} \dpt \psi_h^p(t)$ does not belong to the discrete space. One could adapt the scheme~\eqref{eq:3.2} by incorporating exponential functions into the test space (or, equivalently, modifying the
sesquilinear form) in the spirit of the methods in~\cite{FerrariPerugia2026, FerrariPerugiaZampa2025}. 
However, this approach lies beyond the scope of the present paper, as we are primarily interested in the conservation properties reported in Proposition~\ref{prop:2.3}, which would not be satisfied by the modified formulation.
\begin{remark}[Uniqueness at the discrete level for small~$\mu$]
At the discrete level, we can show that, if~$|\mu|$ is sufficiently small, then problem \eqref{eq:3.2} has a unique solution. Indeed, let $\psi_{h_{t}}^p \in S_{h_{t},0,\bullet}^p(0,T,\C)$ be a solution to~\eqref{eq:3.2} with zero data, i.e., it satisfies (see Remark \ref{rem:31})
\begin{equation} \label{eq:3.8} 
        \mi (\dpt \psi_{h_{t}}^p, \dpt w_{h_{t}}^p)_{J_T} - \mu (\dpt \psi_{h_{t}}^p, w_{h_{t}}^p)_{J_T} = 0 \quad \text{for all~} w_{h_{t}}^p \in S_{h_{t},\bullet,0}^p(0,T,\C).
\end{equation}
Choosing the test function $w_{h_{t}}^p = \psi_{h_{t}}^p-\psi_{h_{t}}^p(T)$ in \eqref{eq:3.8}, we obtain
\begin{equation} \label{eq:3.9}
    0=\mi \| \dpt \psi_{h_{t}}^p \|^2_{L^2(0,T)} - \mu (\dpt \psi_{h_{t}}^p, \psi_{h_{t}}^p - \psi_{h_{t}}^p(T))_{J_T} = \mi \| \dpt \psi_{h_{t}}^p \|^2_{L^2(0,T)} - \mu (\dpt \psi_{h_{t}}^p, \psi_{h_{t}}^p)_{J_T} + \mu | \psi_{h_{t}}^p(T)|^2.
\end{equation}
We split~\eqref{eq:3.9} into its real and imaginary parts. Taking the real part of~\eqref{eq:3.9} and using  
identity \eqref{eq:3.6}, we get
\begin{equation*}
    | \psi_{h_{t}}^p(T) |^2 = \Re (\dpt \psi_{h_{t}}^p, \psi_{h_{t}}^p)_{J_T} = \frac{1}{2} |\psi_{h_{t}}^p(T)|^2, 
    \quad \text{whence~} \, \psi_{h_{t}}^p(T)=0.
\end{equation*}
Moreover, taking the imaginary part of \eqref{eq:3.9} gives
\begin{equation*}
    \| \dpt \psi_{h_{t}}^p \|^2_{L^2(0,T)} = \mu \Im(\dpt \psi_{h_{t}}^p, \psi_{h_{t}}^p)_{J_T}. 
\end{equation*}
Then, using the Cauchy--Schwarz inequality, and the Poincar\'e inequality in one dimension with the sharp constant~$T/\pi$ (which holds for functions vanishing at the endpoints), we obtain
\begin{equation*}
    \| \dpt \psi_{h_{t}}^p \|^2_{L^2(0,T)} \le |\mu| \| \dpt \psi_{h_{t}}^p \|_{L^2(0,T)} \| \psi_{h_{t}}^p\|_{L^2(0,T)} \le |\mu| \frac{T}{\pi} \| \dpt \psi_{h_{t}}^p \|^2_{L^2(0,T)}.
\end{equation*}
If~$|\mu| < T/\pi$, this implies that~$\| \partial_t \psi_{h_{t}}^p\|_{L^2(0,T)}=0$, and thus~$\psi_{h_{t}}^p \equiv 0$.

Although this result establishes a range of values for $\mu$ guaranteeing uniqueness, this theoretical bound is far from being sharp. In practice, we have never encountered a situation 
where uniqueness fails at the discrete level (see also Remark~\ref{rem:3.7} below).
\eremk
\end{remark}
Since a rigorous proof of uniqueness for formulation~\eqref{eq:3.2} using variational tools currently seems out of reach, we rely on the following fundamental assumption for the rest of this section.
\begin{assumption} \label{assu:3.5}
We assume that $p \in \mathbb{N}$ is such that, for all $h_{t} > 0$ and $\mu \in \R$, the variational formulation~\eqref{eq:3.2} admits a unique solution.
\end{assumption}
\begin{remark}
Assumption \ref{assu:3.5} holds for $p=1$. Indeed, when employing standard hat functions, the system matrix stemming from~\eqref{eq:3.2} is a lower triangular matrix with entries all equal to~$-\left(8\mi/h_{t} +\mu/2\right)$ on the diagonal. Its invertibility for all~$h_{t}>0$ and~$\mu \in \R$ readily follows. Furthermore, for this specific case, the proposed scheme coincides with the method introduced in  \cite{KarakashianMakridakis1999}. For~$p>1$, the larger support of B-splines does not provide sufficient insight to establish the invertibility of the system matrix through a linear algebra argument. We refer to Remark \ref{rem:3.7} below for a numerical verification.
\eremk
\end{remark}

\subsection{Toeplitz structure of the system matrix and weakly well-conditioning} \label{sec:3.2}

In this section, under the crucial Assumption \ref{assu:3.5} of uniqueness of the solution to the discrete problem, we show unconditional stability of the scheme~\eqref{eq:3.2} via a  matrix-based approach, in the same spirit as
in~\cite{FerrariFraschiniLoliPerugia2025, FerrariFraschini2026}. We first define the matrices involved in the discrete formulation.

Let consider the B-splines $\{ \phi_j^p\}_{j=0}^{N_{t}+p-1}$ of degree~$p$ with maximal regularity~$C^{p-1}$ defined according to the Cox-De Boor recursion formula~\cite[\S1]{deBoor1972}. Specifically, we consider the \emph{knot} vector composed of~$p+1$ repetitions of~$t_0=0$, multiplicity one of each $t_1, \ldots, t_{N_t-1}$, and $p+1$ repetitions of $t_{N_t}=T$. 
Let~$\{\xi^p_j\}_{j=0}^{N_t+2p}$ denote this knot vector. The B-splines with maximal regularity $p-1$ are defined recursively in $k$ (with the convention that rational functions with zero denominator are identically zero) as
{\small\begin{equation*} 
    \varphi_j^{k}(t) = 
    \begin{cases}
        \displaystyle \frac{t-\xi_j^{p}}{\xi^{p}_{j+k}-\xi^{p}_j} \varphi_j^{k-1}(t) + \displaystyle \frac{\xi^{p}_{j+k+1}-t}{\xi^{p}_{j+k+1}-\xi^{p}_{j+1}} \varphi_{j+1}^{k-1}(t),
        & \text{if } t \in [\xi_j^p,\xi^p_{j+k+1}), \\
        0, & \text{otherwise},
    \end{cases}
\end{equation*}
for $j=0,\ldots,N_t+p-1$, with $\varphi_j^{0}(t) = 1$ if $t \in [\xi_j^p,\xi_{j+1}^p)$, and $\varphi_j^0(t) = 0$ otherwise.} 
In particular, the basis is ordered such that, for the spaces defined in \eqref{eq:2.5}, the following representations hold:
\begin{equation*}
    S_{h_{t},0,\bullet}^{p}(0,T,\C) = \text{span}_{\C}\{\phi_j^p : j = 1,\ldots, N_{t}+p-1\}, \quad S_{h_{t},\bullet,0}^{p}(0,T,\C) = \text{span}_{\C}\{\phi_j^p : j = 0,\ldots, N_{t}+p-2\}. 
\end{equation*}
Hence, the matrices involved in the discrete variational formulation~\eqref{eq:3.2} are defined as
\begin{equation} \label{eq:3.10}
	\B^p_{h_{t}}[\ell,j] := (\dpt \phi_j^p, \dpt \phi^p_{\ell-1})_{J_T}, \quad \bC^p_{h_{t}}[\ell,j] := (\dpt \phi_j^p, \phi^p_{\ell-1})_{J_T},
\end{equation}
for $\ell, j = 1,\ldots,N_{t}+p-1$. The system matrix associated with formulation \eqref{eq:3.2} then reads (see Remark~\ref{rem:31})
\begin{equation} \label{eq:3.11}
    \mathbf{K}^p_{h_t}(\mu) := \mi \B^p_{h_t} - \mu \bC^p_{h_t}.
\end{equation}
\begin{remark}[Assumption \ref{assu:3.5} as a generalized eigenvalue problem] \label{rem:3.7}
Assumption \ref{assu:3.5} is equivalent to the invertibility of the matrix defined in \eqref{eq:3.11} for all $\mu \in \R$ and $h_{t} > 0$. For a given meshsize $h_{t}$, this is equivalent to stating that the corresponding generalized eigenvalue problem
\begin{equation} \label{eq:3.12}
    \text{find~} (\lambda,\mathbf{v}) \in \C \times \C^{N_t+p-1} \quad \text{~such that~} \quad \mathbf{B}^p_{h_{t}} \mathbf{v} = \lambda \mathbf{C}^p_{h_{t}} \mathbf{v},
\end{equation}
does not admit any purely imaginary eigenvalues.

In Figure~\ref{fig:1}, we report the numerical approximation of the eigenvalues for the generalized eigenvalue problem~\eqref{eq:3.12}, computed using the \texttt{eig} function in MATLAB R2025b.\footnote{Due to the ill-conditioning associated with the computation of these eigenvalues, this numerical verification requires evaluating the matrix entries with high machine precision. Consequently, the matrices were generated using the MATLAB's Variable Precision Arithmetic (\texttt{vpa}), setting a precision of $1000$ decimal digits. The source code used for these computations, along with the high-precision matrices, is available for verification in the \texttt{verifications} folder of the repository~\cite{FerrariCodes2025}.} In the reported experiments, for a fixed degree $p$, all eigenvalues are uniformly bounded away from the imaginary axis within the left complex half-plane. In particular, none of the computed eigenvalues are purely imaginary, providing empirical evidence in support of Assumption~\ref{assu:3.5}. \eremk

\begin{figure}[ht]
        \centering
        \includegraphics[width=0.3\linewidth]{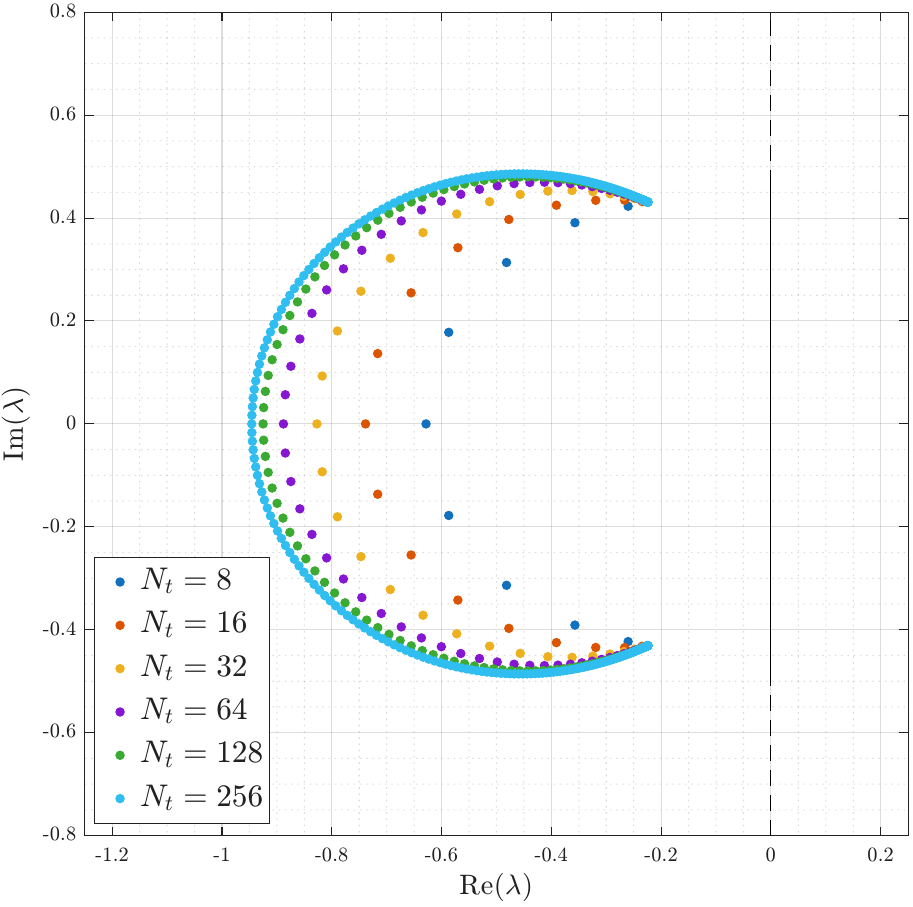}
        \includegraphics[width=0.3\linewidth]{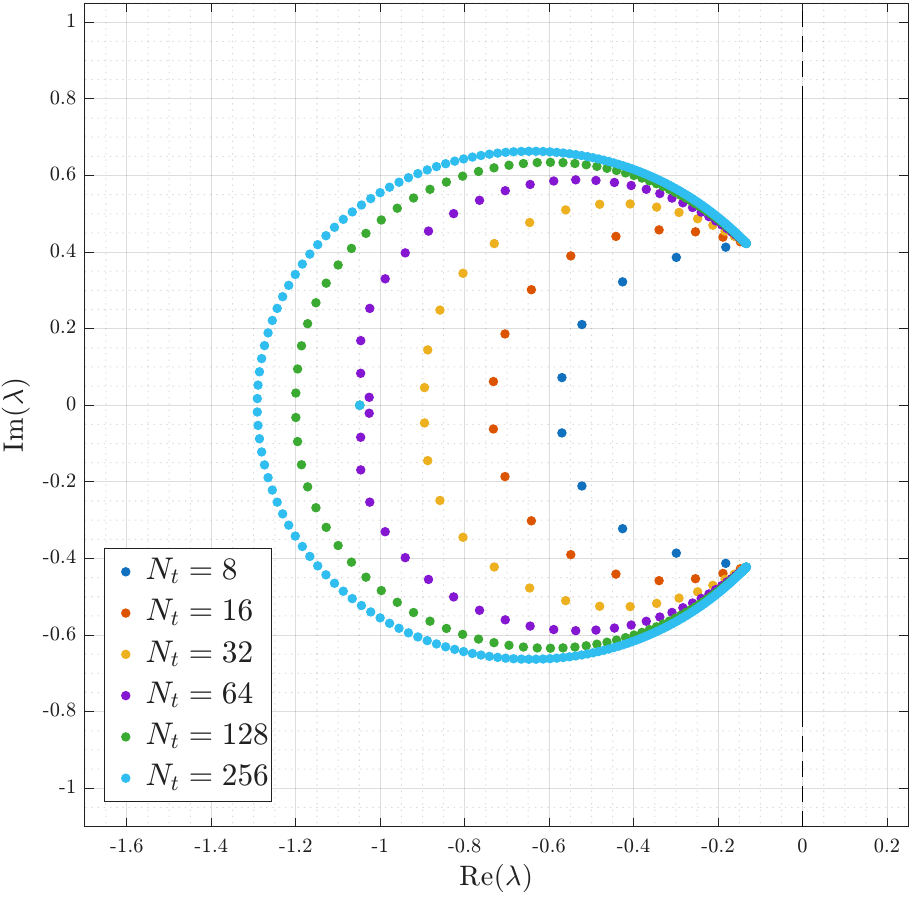} \\
        \includegraphics[width=0.3\linewidth]{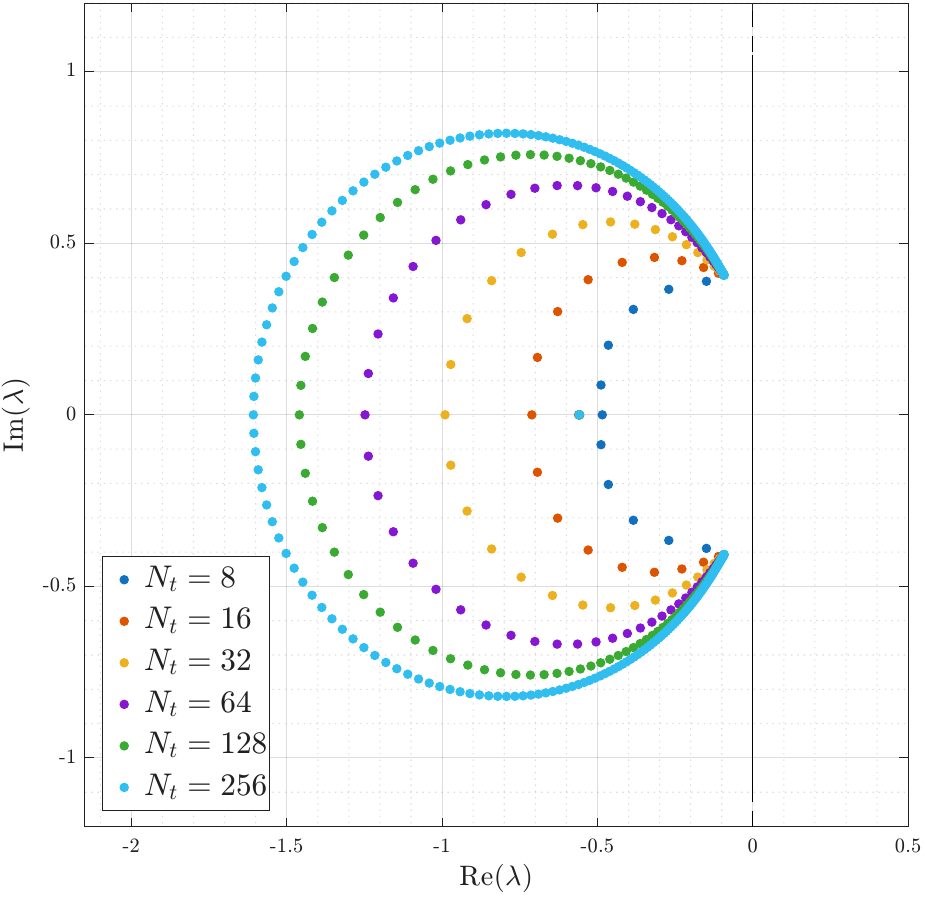}
        \includegraphics[width=0.3\linewidth]{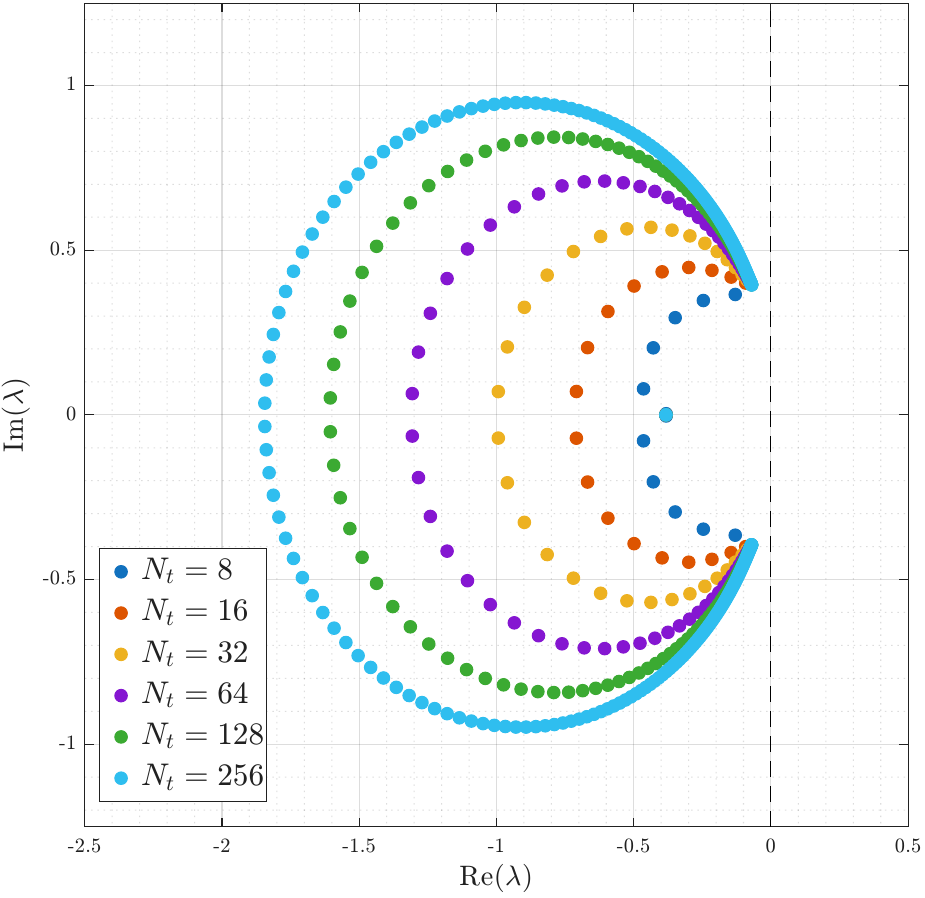}
        \caption{Numerical approximation of the eigenvalues for the generalized eigenvalue problem~\eqref{eq:3.12}. The results are shown for spline degrees~$p =2$ (top-left), $p=3$ (top-right), $p=4$ (bottom-left), and $p=5$ (bottom-right).}
        \label{fig:1}
\end{figure}
\end{remark}
The aim of this section is to study the behavior of the condition number of the family of matrices $\{\K_{h_{t}}^p(\mu)\}$ with respect to $h_{t}$, $p$, and $\mu$. In particular, we show that, for all $p \in \N$ and $\mu \in \R$, the condition number of~$\K_{h_{t}}^p(\mu)$, as a function of~$h_{t}$, does not grow exponentially but rather as an algebraic power of~$1/h_{t}$. This is the type of unconditional stability that we aim for, which contrasts with conditional stability, where a particular relation between~$h_{t}$, $p$, and $\mu$ is required so that the condition number grows as an algebraic power of~$1/h_{t}$, and it grows exponentially otherwise.

The subsequent analysis is based on four main steps, which are common also to the analyses carried out in~\cite{FerrariFraschini2026} and~\cite{FerrariFraschiniLoliPerugia2025}. The outline is as follows.
\begin{enumerate}
    \item We show that the entries of the matrices~$ h_{t}\B_{h_{t}}^p $ and $ \bC_{h_{t}}^p $, defined in \eqref{eq:3.10}, do not depend on $h_{t}$, and that they are Toeplitz matrices with specific symmetries, except for a fixed number of entries (depending only on $p$) in the top-left and bottom-right corners. We refer to such matrices as \textit{nearly Toeplitz}. As a consequence, the system matrix $\mathbf{K}_{h_{t}}^p(\mu)$ is nearly Toeplitz and exhibits the same structure, and up to a rescaling, its entries only depend on the quantity~$ \mu h_{t}$. 
    \item  We recall the definition in~\cite{AmodioBrugnano1996} of \textit{weakly well-conditioning} for a family of matrices, and state a sufficient condition for the weakly well-conditioning of families of banded Toeplitz matrices~\cite[Thm.~3]{AmodioBrugnano1996}, along with its generalization to nearly Toeplitz matrices~\cite[Thm.~4.5]{FerrariFraschini2026}. This sufficient condition involves characterizing the zeros of the \textit{symbol polynomial} associated with the family.
    \item Given the symmetries of $h_t\B^p_{h_t}$ and $\bC^p_{h_t}$ exploited in the first step, we obtain that, if the symbol polynomial associated with the family $\{\K_{h_t}^p(\mu)\}$ has exactly two zeros of unitary modulus, then $\{\K_{h_t}^p(\mu)\}$ is weakly well-conditioned.
    \item Finally, we compute the symbol polynomial associated with the family $ \{ \K_{h_t}^p(\mu) \}$ and show that it has exactly two zeros of unitary modulus for all $ \mu \in \R$ and $ p \in \N$.
\end{enumerate}

\paragraph{Step 1. Nearly Toeplitz structure of the matrices $h_t\B_{h_t}^p$ and $\bC_{h_t}^p$} 
\label{step:1}
\phantom{of the opera} \\
The next results have been proven in~\cite[Prop.~3.2]{FerrariFraschini2026} and~\cite[Prop.~2.3]{FerrariFraschiniLoliPerugia2025} (see also  \cite[\S 4.1]{GaroniManniPelosiSerraCapizzanoSpeleers2014}).
\begin{proposition} \label{prop:3.8}
Let~$p \in \N$ and let the matrices~$\B_{h_t}^p$ and~$\bC_{h_t}^p$ be defined in~\eqref{eq:3.10}. Then, the following properties hold true.
\begin{enumerate}[label=\arabic*), ]
\item \label{P1} The entries of the matrices~$h_t\B_{h_t}^p$ and~$\bC_{h_t}^p$ are independent of the mesh parameter~$h_t$.
\item The matrices~$\B_{h_t}^p$ and~$\bC_{h_t}^p$ are persymmetric, i.e., they are symmetric about their northeast-to-southwest diagonal (anti-diagonal).
\item \label{P3} The matrices $\B_{h_t}^1$ and $\bC_{h_t}^1$ are lower triangular Toeplitz band matrices with three nonzero diagonals. For $p > 1$, the matrices $\B_{h_t}^p$ and $\bC_{h_t}^p$, except for $2(2p^2 - 3)$ entries located at the top-left and bottom-right corners, exhibit a Toeplitz band structure with $p+1$ bands below the main diagonal and $p-1$ above. Moreover, in the purely Toeplitz band part, $\B_{h_t}^p$ and $\bC_{h_t}^p$ exhibit symmetry and skew-symmetry, respectively, relative to the first lower co-diagonal. In the top-left corner, the non-zero entries that do not follow the Toeplitz structure are the following:
\begin{equation*} 
    \vphantom{
    \begin{matrix}
    \overbrace{XYZ}^{\mbox{$R$}}\\ \\ \\ \\ \\ \\ \\ \\
    \underbrace{pqr}_{\mbox{$S$}} \\
    \end{matrix}}
    \begin{matrix}
        \vphantom{a}
        \coolleftbrace{p+1}{* \\ \vdots \\ * \\ * \vspace{0.5cm}} \vspace{0.1cm}\\ 
        \coolleftbrace{p-2}{* \\ \vdots \\ *}
    \end{matrix}
    \begin{pmatrix}
        \coolover{p-1}{*     & \ldots &     * \hspace{0.1cm}}& \coolover{p-1}{* & \phantom{***} & \phantom{*}  } 
        \\        \vdots &        & \vdots               &         \vdots & \ddots           & 
        \\        *      & \ldots & *                    & *              &  \ldots          &  *           
        \\         *     & \ldots &     *                &              * &  \ldots          &  *           &  
        \\        *      & \ldots & *                    &                &                  &              
        \\         *     & \ldots &     *                &                &                  &                            &
        \\               & \hspace{-0.2cm}\ddots & \vdots               &                &                   
        \\               &        &  \hspace{-0.3cm}*  \, \,\, *    &                &                   
    \end{pmatrix}.
\end{equation*}
A similar (persymmetric) structure holds for the bottom-right corner.
\end{enumerate}
\end{proposition}
\begin{example} \label{ex:3.9}
As an illustrative example, we write the entries of the matrices $\B_{h_t}^2, \bC_{h_t}^2 \in \R^{(N_t+1)\times (N_t+1)}$ to highlight their algebraic structure
\begin{align*}
h_t \B_{h_t}^2 &= \frac{1}{6} 
\setcounter{MaxMatrixCols}{20}
    \begin{pmatrix} 
            \colorbox{yellow!30}{$-6$} & \colorbox{yellow!30}{$-2$}  \\ 
            \colorbox{yellow!30}{$8$} & \colorbox{yellow!30}{$-1$} & -1 \\
            \colorbox{yellow!30}{$-1$} & 6 & -2 & -1 \\
            -1 & -2 & 6 & -2 & -1 \\
            & \ddots & \ddots & \ddots & \ddots & \ddots \\
            & & -1 & -2 & 6 & -2 & -1 \\
            & & & -1 & -2 & 6 & \colorbox{yellow!30}{$-1$} & \colorbox{yellow!30}{$-2$} \\
            & & & & -1 & \colorbox{yellow!30}{$-1$} & \colorbox{yellow!30}{$8$} & \colorbox{yellow!30}{$-6$} \\
    \end{pmatrix}, \\[10pt]
\bC_{h_t}^2 &= \frac{1}{24} 
\setcounter{MaxMatrixCols}{20}
    \begin{pmatrix} 
            \colorbox{yellow!30}{$10$} & \colorbox{yellow!30}{$2$}  \\
            \colorbox{yellow!30}{$0$} & \colorbox{yellow!30}{$9$} & 1 \\
            \colorbox{yellow!30}{$-9$} & 0 & 10 & 1 \\
            -1 & -10 & 0 & 10 & 1 \\
            & \ddots & \ddots & \ddots & \ddots & \ddots \\
            & &  -1 & -10 & 0 & 10 & 1 \\
            & & & -1 & -10 & 0 & \colorbox{yellow!30}{$9$} & \colorbox{yellow!30}{$2$} \\
            & & & & -1 & \colorbox{yellow!30}{$-9$} & \colorbox{yellow!30}{$0$} & \colorbox{yellow!30}{$10$} \\
    \end{pmatrix}.
\end{align*}
With the exception of $2\cdot 2^2-3 = 5$ entries in the top-left corner and $5$ entries in the bottom-right corner (highlighted in yellow), both are banded Toeplitz matrices. Furthermore, the matrix $\B_{h_t}^2$ exhibits symmetry with respect to its first lower codiagonal, while $\bC_{h_t}^2$ exhibits skew-symmetry.
\end{example}
Due to the scaling property~\ref{P1} in Proposition \ref{prop:3.8}, it is reasonable to introduce the families of matrices
\begin{equation} \label{eq:3.13}
    \{\B_n^p\}_n \quad \text{and} \quad \{\bC_n^p\}_n,
\end{equation}
with $\B_n^p := h_{t}\B_{h_{t}}^p$, $\bC_n^p := \bC_{h_{t}}^p$, and $n := N_t + p - 1$. Here and in the following sections, the subscript~$n$ denotes the size of the involved matrix. We also define the following scaled version of the system matrix in \eqref{eq:3.11}:
\begin{equation} \label{eq:3.14}
    \K_n^p(\rho) := \mi \B_n^p - \rho \bC_n^p,
\end{equation}
where we have set~$\rho := \mu h_t$. Thus, the relation~$\mathbf{K}_n^p(\rho) = h^{-1}_t \mathbf{K}_{h_t}^p(\mu)$ holds. Consequently, the entries of the scaled matrix $\mathbf{K}_n^p(\rho)$ depend on the original parameters $h_t$ and $\mu$ only through their product $\rho$.

We say that a family of matrices is composed of \emph{nearly Toeplitz} matrices if each matrix in the family is a perturbation of a Toeplitz matrix, where the perturbation matrix has a number of nonzero entries that is independent of $n$ and located only in the top-left and/or bottom-right corners. From Proposition \ref{prop:3.8}, the families $ \{\B_n^p\}_n $ and $ \{\bC_n^p\}_n $ consist of nearly Toeplitz matrices. Consequently, the family $ \{\K_n^p(\rho)\}_n $ also consists of nearly Toeplitz matrices.

\paragraph{Step 2. Weakly well-conditioning of nearly Toeplitz matrices.} \label{step:2} 
\phantom{12}\\ 
A family of nonsingular matrices $ \{\A_n\}_n $ with $ \A_n \in \C^{n \times n} $ is said to be \textit{well conditioned} if the condition numbers $ \kappa(\A_n) $ are uniformly bounded with respect to~$ n $. It is said to be~\textit{weakly well-conditioned} if~$ \kappa(\A_n) $ grows as a power of~$ n $. Clearly, these definitions do not depend on the chosen matrix norm.

Given~$m,k \in \N$ and coefficients $\{a_j\}_{j=-m}^k \subset \C$, we consider the family of Toeplitz band matrices $\{ \A_n \}_{n}$, with structure
\begin{equation} \label{eq:3.15}
\A_n =  
\setcounter{MaxMatrixCols}{20}
    \begin{pmatrix} 
            a_0 & \ldots & a_k & &  \\ 
            \vdots & \ddots & & \ddots \\
            a_{-m} & & \ddots & &  a_k \\
            & \ddots & & \ddots & \vdots \\
            &  & a_{-m} & \ldots & a_0 \\
    \end{pmatrix}_{n \times n}.
\end{equation}
We associate with the family $\{\A_n\}_n$, the \textit{symbol polynomial} $q^{\A} \in \P_{m+k}(\C)$ defined as
\begin{equation} \label{eq:3.16}
    q^{\A}(z) := \sum_{j=-m}^k a_j z^{m+j}.
\end{equation}
In \cite[Thm.~3]{AmodioBrugnano1996}, a sufficient condition for the weakly well-conditioning of a family of Toeplitz band matrices has been established, which involves locating the zeros of its symbol polynomial. We say that a polynomial 
is of type~$(s,u,\ell)$ if it has $s$ zeros with modulus smaller than $1$, $u$ zeros with unit modulus, and $\ell$ zeros with modulus larger than $1$.
\begin{theorem}[{see~\cite[Thm.~3]{AmodioBrugnano1996}}] \label{th:3.10}
Let $\{\A_n\}_{n}$ represent a family of nonsingular Toeplitz band matrices as in \eqref{eq:3.15}, and let~$q^{\A}$ in \eqref{eq:3.16} be its symbol polynomial. Then, the family $\{\A_n\}_{n}$
\begin{itemize}
\item[a)] is well conditioned if $q^{\A}$ is of type $(m,0,k)$;
\item[b)] is weakly well-conditioned if $q^{\A}$ is of type $(m_1,m_2,k)$ or $(m,k_1,k_2)$, where $m_1+m_2=m$ and $k_1+k_2=k$.
\end{itemize}
\end{theorem}
Since the family of matrices $ \{\K_n^p(\rho)\}_n $ defined in \eqref{eq:3.14} consists of nearly Toeplitz matrices rather than Toeplitz matrices, we cannot directly apply Theorem \ref{th:3.10}. Instead, we use an auxiliary result derived in \cite{FerrariFraschini2026}.
\begin{theorem}[{see~\cite[Thm.~4.5]{FerrariFraschini2026}}] \label{th:3.11}
Let $ \{\A_n\}_n $ be a family of nonsingular Toeplitz band matrices as in \eqref{eq:3.15}. Consider a family $ \{\widehat{\A}_n\}_n $ consisting of nonsingular nearly Toeplitz matrices that are perturbations of $ \{\A_n\}_n $, where the perturbations of each $ \widehat{\A}_n $ are only in the top-left and bottom-right blocks of size $(m+k) \times (m+k)$, with entries that are independent of $n$, i.e., the perturbation blocks are of the form
\begin{equation} \label{eq:3.17}
    \vphantom{
    \begin{matrix}
            \overbrace{XYZ}^{\mbox{$R$}} \\ \\ \\ \\ \\
            \underbrace{pqr}_{\mbox{$S$}} \\
        \end{matrix}}
    \begin{matrix}
        \coolleftbrace{m}{* \\ \vspace{0.15cm} \\ * } \vspace{0.1cm}\\ 
        \coolleftbrace{k}{* \\ \vspace{0.15cm} \\ *}
    \end{matrix}%
    \hspace{-0.15cm}
    \begin{pmatrix}
        \coolover{k}{*     & \ldots &     * \hspace{0.2cm}}& \coolover{m}{\squareasterisk & \phantom{**}  & \phantom{**}} 
        \\        \vdots &        & \vdots               &         \vdots & \ddots           & 
        \\         *     & \ldots &     *                &              * &  \ldots          &  \squareasterisk           
        \\         \squareasterisk     & \ldots &     *                &                &                  &                            
        \\               & \ddots & \vdots               &                &                  & 
        \\               &        & \squareasterisk                    &                &                  &  
        \end{pmatrix}, \quad\quad
         \vphantom{
    \begin{matrix}
            \overbrace{XYZ}^{\mbox{$R$}}\\ \\ \\ \\ \\ \\ \\ \\
            \underbrace{pqr}_{\mbox{$S$}} \\
        \end{matrix}}%
    \begin{pmatrix}
         &        &                &        \squareasterisk  &            & 
        \\              &  &                     &   \vdots            &  \ddots          &             
        \\              &                &                  & *     & \ldots &     \squareasterisk                             
        \\             \squareasterisk  & \ldots & *   &       *     &    \ldots            &                 *  
        \\               &  \ddots & \vdots                    &   \vdots             &              &  \vdots
        \\ \coolunder{k}{ \phantom{**}  & \phantom{**} &     \squareasterisk } & \coolunder{m}{* & \ldots & *} 
        \end{pmatrix}
        \hspace{-0.15cm}
    \begin{matrix}
        \coolrightbrace{* \\ \vspace{0.15cm} \\ * }{m} \vspace{0.1cm}\\ 
        \coolrightbrace{* \\ \vspace{0.15cm} \\ * }{k} \vspace{0.1cm}
    \end{matrix}%
\end{equation}
where the matrix on the left represents the top-left perturbation block, and the matrix on the right represents the bottom-right perturbation block. Furthermore, assume that the matrices $ \widehat{\A}_n $ have nonzero entries in the two outer codiagonals (the~$k^{\thh}$ and the~$(-m)^{\thh}$), i.e., the entries in the square boxes of the two perturbed blocks in \eqref{eq:3.17}. Under these conditions, the families $ \{\A_n\}_n $ and $ \{\widehat{\A}_n\}_n $ exhibit the same conditioning behavior.
\end{theorem}
Theorem \ref{th:3.11} is applicable to $ \{\K_n^p(\rho)\}_n$ with~$m=p+1$ and~$k=p-1$ (see Proposition \ref{prop:3.8}, point \ref{P3}), and to the family consisting of the extensions of the purely Toeplitz band parts of $ \K_n^p(\rho) $ to $ n \times n $ matrices, provided that we show the entries of the two outer codiagonals (the $(-p-1)^{\text{th}}$ and the $(p-1)^{\text{th}}$) of $\K_n^p(\rho)$ are nonzero.
\begin{proposition} \label{prop:3.12}
For all $p \in \N$ and $\rho \in \R$, we have, for all $j=1,\ldots,n-p+1$,
\begin{equation} \label{eq:3.18} 
    \K^p_n(\rho)[j,j+p-1] \ne 0, \quad\quad 
    \K^p_n(\rho)[j+p+1,j] \ne 0
\end{equation}
In particular, the hypotheses of Theorem \ref{th:3.11} are satisfied by the family of matrices $\{\K_n^p(\rho)\}_n$.
\end{proposition}
\begin{proof}
In \cite[Prop.~3.6]{FerrariFraschiniLoliPerugia2025}, it was shown that, for all $p \in \N$, it holds true that
\begin{equation*}
    \bC^p_n[j,j+p-1] > 0 \quad\text{and}\quad  \B^p_n[j,j+p-1] < 0, \quad \text{for } j=1,\ldots,n-p+1.
\end{equation*}
Since $\bC^p_n, \B_n^p \in \R^{n \times n}$ and $\rho \in \R$, we deduce from definition \eqref{eq:3.14} that
\begin{equation*}
    \Re \bigl(\K^p_n(\rho)[j,j+p-1]\bigr) = -\rho \bC^p_n[j,j+p-1] \ne 0, \quad\quad 
    \Im \bigl(\K^p_n(\rho)[j,j+p-1]\bigr) = \B^p_n[j,j+p-1] < 0.
\end{equation*}
This yields the first assertion of~\eqref{eq:3.18}.
Using an analogous argument, the second assertion follows.
\end{proof}
\paragraph{Step 3. Reciprocity of the symbol polynomial \texorpdfstring{$q^{\K^p(\rho)}$}{1}} \label{step:3} \phantom{123}\\
By virtue of Theorem~\ref{th:3.11}, to study the stability of problem \eqref{eq:3.2}, it suffices to study the conditioning of the family of extensions of the purely Toeplitz band parts of~$ \K_n^p(\rho) $. Without ambiguity, we associate the symbol polynomial $ q^{\K^p(\rho)} $ of this family with the family of nearly Toeplitz matrices $ \{\K_n^p(\rho)\}_n $. We proceed similarly for $ \{\B_n^p\}_n $ and $ \{\bC_n^p\}_n $, denoting their symbol polynomials by~$ q^{\B^p} $ and $ q^{\bC^p} $, respectively. From property~\ref{P3} in Proposition \ref{prop:3.8}, the polynomials $ q^{\B^p} $ and $ q^{\bC^p} $ have degree $2p$, and their coefficients depend only on~$p$. By linearity of the symbol polynomials, we deduce
\begin{equation} \label{eq:3.19}
    q^{\K^p(\rho)}(z) = \mi q^{\B^p}(z) - \rho q^{\bC^p}(z).
\end{equation}
\begin{example}
Following up Example~\ref{ex:3.9}, we can explicitly write the symbol polynomials for $p=2$ by directly applying definition \eqref{eq:3.16}. Since the purely Toeplitz parts have $m=p+1=3$ lower bands and $k=p-1=1$ upper band, we have%
\begin{equation} \label{eq:3.20}
\begin{aligned}
    q^{\mathbf{B}^2}(z) = \frac{1}{6} \left( -1 - 2z + 6z^2 - 2z^3 - z^4 \right), \quad
    q^{\mathbf{C}^2}(z) = \frac{1}{24} \left( -1 - 10z + 10z^3 + z^4 \right).
\end{aligned}
\end{equation}
\end{example}
In the next proposition, we show that $q^{\K^p(\rho)}$ is a \textit{reciprocal} polynomial, meaning that, if $\xi \in \mathbb{C}$ is a root of~$q^{\K^p(\rho)}$, then $\xi^{-1}$ is also a root. Using this information, we then prove that, if~$q^{\K^p(\rho)}$ has two zeros with unitary modulus, then the family $\{ \K_n^p(\rho) \}_n$ is weakly well-conditioned.
\begin{proposition} \label{prop:3.14}
The symbol polynomial $q^{\mathbf{K}^p(\rho)}$ of the family $\{ \K_n^p(\rho) \}_n$ defined in \eqref{eq:3.14} is a reciprocal polynomial. As a consequence, if the polynomial $q^{\K^p(\rho)}$ has two zeros of unit modulus and %
Assumption \ref{assu:3.5} is satisfied, then the family $\{ \K_n^p(\rho) \}_n$ is weakly well-conditioned.
\end{proposition}
\begin{proof}
From the symmetry and skew-symmetry of the matrices $\B_n^p$ and $\bC_n^p$ (see property~\ref{P3} in Proposition~\ref{prop:3.8}), the polynomials $q^{\B^p}$ and $q^{\bC^p}$ can be expressed as
\begin{align*}
    q^{\B^p}(z) & = b_0 + b_1 z + \ldots + b_p z^p + \ldots + b_1 z^{2p-1} + b_0 z^{2p} = z^p \sum_{j=0}^p b_j \left( z^{-(p+j)} + z^{p+j} \right),
    \\ q^{\bC^p}(z) & = c_0 + c_1 z + \ldots + c_{p-1} z^{p-1} - c_{p-1} z^{p+1} - \ldots - c_1 z^{2p-1} - c_0 z^{2p}= z^p \sum_{j=0}^p c_j \left( z^{-(p+j)} - z^{p+j} \right),
\end{align*}
for some coefficients $\{b_j\}_j, \{c_j\}_j \subset \R$, with $b_0 \neq 0$ and $c_0 \neq 0$ (see also Section \ref{step:2}). In particular, from these expressions, we deduce that both $q^{\B^p}$ and $q^{\bC^p}$ are reciprocal polynomials, and from \eqref{eq:3.19}, it follows that $q^{\K^p(\rho)}$ is also a reciprocal polynomial for all $\rho \in \R$. As a consequence, $q^{\K^p(\rho)}$ is a polynomial of type $(\ell, 2p - 2\ell, \ell)$ for some $\ell = 0, \dots, p$. In the setting of Theorem~\ref{th:3.10}, for the matrices $\K^p_n(\rho)$, it holds that~$k = p - 1$ and~$m = p + 1$. Therefore, we can guarantee weakly well-conditioning (see also Theorem \ref{th:3.11}) if $\ell = p - 1$, resulting in exactly $2p - 2\ell = 2$ zeros of unit modulus for $q^{\K^p(\rho)}$.
\end{proof}
In the next section, we explicitly compute the restriction of $q^{\K^p(\rho)}$ to the boundary of the unit circle and show that, for all $\rho \in \R$, it has two zeros of unit modulus.

\paragraph{Step 4. Number of zeros of unitary modulus of the symbol polynomial~$q^{\K^p(\rho)}$} \label{step:4} \phantom{}\\
The aim of this section is to derive an explicit expression for $q^{\K^p(\rho)}(e^{\mi \theta})$ with~$\theta \in [-\pi,\pi]$ and subsequently show that, for all $p \in \mathbb{N}$ and $\rho \in \R$, there exist exactly two values of $\theta$, both with multiplicity one, such that $q^{\K^p(\rho)}(e^{\mi \theta}) = 0$. These simple zeros, expressed in polar coordinates, correspond bijectively to the simple complex zeros of $q^{\K^p(\rho)}$, as we establish in the following lemma.
\begin{lemma}[Characterization of simple zeros]
Let $f : \mathbb{C} \to \mathbb{C}$ be a complex function. Then, $z = e^{\mi \theta}$, with~$\theta \in [-\pi,\pi]$, is a simple zero of $f$ if and only if $\theta$ is a simple zero of the function $F(\theta) := f(e^{\mi \theta})$.
\end{lemma}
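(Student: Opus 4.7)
The plan is to prove this via a direct application of the chain rule, exploiting the fact that the map $\theta \mapsto e^{\mi \theta}$ has nonvanishing derivative.

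First, I would observe that simple zeros in both settings are characterized identically: $z_0$ is a simple zero of $f$ means $f(z_0)=0$ and $f'(z_0)\neq 0$, while $\theta_0$ is a simple zero of $F$ means $F(\theta_0)=0$ and $F'(\theta_0)\neq 0$. The equivalence of the vanishing conditions $f(e^{\mi\theta_0})=0 \iff F(\theta_0)=0$ is immediate from the definition $F(\theta) := f(e^{\mi\theta})$, so the whole content of the lemma lies in comparing $f'(z_0)$ with $F'(\theta_0)$.

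Next, I would differentiate $F$ using the chain rule to obtain
\begin{equation*}
F'(\theta) = \mi e^{\mi\theta}\, f'(e^{\mi\theta}),
\end{equation*}
and evaluate at $\theta = \theta_0$ with $z_0 = e^{\mi\theta_0}$ to get $F'(\theta_0) = \mi z_0 f'(z_0)$. Since $|z_0|=1$, the factor $\mi z_0$ is nonzero, so $F'(\theta_0) \neq 0$ if and only if $f'(z_0) \neq 0$. Combining this with the equivalence of the zero conditions establishes the bijection.

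I do not foresee any serious obstacle: the result is essentially a one-line consequence of the chain rule together with $|e^{\mi\theta}|=1$. The only minor care needed is to note that the bijection $\theta \in [-\pi,\pi] \to \{|z|=1\}$ is not quite one-to-one at the endpoints $\theta=\pm\pi$, but since both endpoints map to the same point $z=-1$ with the same value of $\mi z_0 \neq 0$, the simplicity condition is unaffected, and the statement holds as phrased.
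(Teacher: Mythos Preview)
Your proposal is correct and matches the paper's proof essentially verbatim: both apply the chain rule to obtain $F'(\theta) = \mi e^{\mi\theta} f'(e^{\mi\theta})$ and conclude from the nonvanishing of $\mi e^{\mi\theta}$ that $F'(\theta)\neq 0$ if and only if $f'(e^{\mi\theta})\neq 0$. Your additional remarks on the equivalence of the vanishing conditions and the endpoint identification are sound but not needed for the argument as the paper presents it.
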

\begin{proof}
From the chain rule, we have
\begin{equation*}
    F'(\theta) = \frac{\text{d}}{\text{d}\theta} f(e^{\mi \theta}) = \mi e^{\mi \theta} f'(e^{\mi \theta}).
\end{equation*}
Therefore, $F'(\theta) \neq 0$ if and only if $f'(e^{\mi \theta}) \neq 0$, which shows that $z = e^{\mi \theta}$ is a simple zero of $f$ if and only if $\theta$ is a simple zero of $F$.
\end{proof}
In the next proposition, we recall the explicit expressions for the symbol polynomials $q^{\B^p}$ and $q^{\bC^p}$ established in \cite[Lemma 7]{GaroniManniPelosiSerraCapizzanoSpeleers2014} and~\cite[Lemma 3.3 b)]{Donatelli2016} (see also~\cite[Prop.~3.7]{FerrariFraschiniLoliPerugia2025}).
\begin{proposition} \label{prop:311}
For~$p \in \N$ and~$\theta \in [-\pi,\pi]$, define the functions ~$B_p, C_p : [-\pi,\pi] \to \R$ as
\begin{equation} \label{eq:3.21}
    B_p(\theta) := -(2-2\cos \theta)^{p+1} \sum_{j \in \Z} \frac{1}{(\theta + 2j\pi)^{2p}}, \quad C_p(\theta) := -(2-2\cos \theta)^{p+1} \sum_{j \in \Z} \frac{1}{(\theta + 2j\pi)^{2p+1}}.
\end{equation}
Then, the symbol polynomials $q^{\B^p}$ and $q^{\bC^p}$ associated with the families in \eqref{eq:3.13} are given by
\begin{equation} \label{eq:3.22}
     q^{\mathbf{B}^p}(e^{\mi \theta}) = - e^{\mi p \theta}  B_p(\theta), \quad\quad q^{\mathbf{C}^p}(e^{\mi \theta}) = - e^{\mi p \theta}\mi C_p(\theta).
\end{equation}
\end{proposition}
\begin{remark}[Equivalent expressions for~$B_p(\theta)$ and~$C_p(\theta)$]
 The infinite sums in \eqref{eq:3.21} can be expressed for all $p \in \N$ as rational functions of trigonometric functions. Indeed, starting from  (see, e.g., \cite[Eq.~1.422.4]{gradshteyn2007table})
\begin{equation*}
    \frac{1}{\sin^2(x)}= \sum_{j \in \Z} \frac{1}{(x+j\pi)^2},
\end{equation*}
and applying the substitution $x = \theta/2$, we obtain
\begin{equation*}
    \sum_{j \in \Z} \frac{1}{(\theta+2j\pi)^2} = \frac{1}{4\sin^2(\theta/2)}.
\end{equation*}
The sums corresponding to higher powers in the denominator can then be obtained by differentiating the above identity recursively with respect to $\theta$.
\eremk
\end{remark}
\begin{example} \label{ex:3.18}
With \eqref{eq:3.21} and the previous remark, we can explicitly compute
\begin{equation*}
\begin{aligned}
    B_2(\theta) & = -(2-2\cos \theta)^3 \sum_{j \in \Z} \frac{1}{(\theta + 2j\pi)^4} = -(2-2\cos \theta)^3 \frac{3-2\sin^2(\theta/2)}{48 \sin^4(\theta/2)} = - \frac{2}{3} (1-\cos \theta)(2+\cos \theta),
    \\
    C_2(\theta) & = -(2-2\cos \theta)^3 \sum_{j \in \Z} \frac{1}{(\theta + 2j\pi)^5} =  -(2-2\cos \theta)^3 \cos(\theta/2) \frac{3-\sin^2(\theta/2)}{96 \sin^5(\theta/2)}= - \frac{1}{6} \sin \theta (5+\cos \theta).
\end{aligned}
\end{equation*}
Substituting these expressions into \eqref{eq:3.22} and using the identities $\cos \theta = (z+z^{-1})/2$ and $\sin \theta = -\mi(z-z^{-1})/2$, for $z = e^{\mi \theta}$, we recover exactly the polynomials in~\eqref{eq:3.20}.
\end{example}
Combining Proposition \ref{prop:311} and \eqref{eq:3.19}, we obtain the following explicit expression for the symbol polynomial of the family $\{\K_n^p(\rho)\}_n$:
\begin{equation*}
     q^{\mathbf{K}^p(\rho)}(e^{\mi \theta}) =  -\mi e^{\mi p \theta}( B_p(\theta) - \rho C_p(\theta)), \quad \rho \in \R, \ \theta \in [-\pi,\pi], \text{~and~} p \in \N.
\end{equation*}
In view of Proposition \ref{prop:3.14}, we need to verify that, for all~$\rho \in \R$, $q^{\K^p(\rho)}(e^{\mi \theta})$ has exactly two simple zeros (as a function of~$\theta$). To this aim, we define the auxiliary function
\begin{equation*}
    K_p : [-\pi,\pi] \times \R \to \R^+, \quad K_p(\theta,\rho) := B_p(\theta) - \rho C_p(\theta).
\end{equation*}
Observe that $K_p(\theta,\rho) = 0$ if and only if $q^{K^p(\rho)}(e^{\mi \theta})=0$.
\begin{proposition}[Simple zeros of~$K_p(\theta, \rho)$] \label{prop:3.19}
For all $\rho \in \R$ and~$p \in \mathbb{N}$, the function $K_p(\theta,\rho)$ has exactly 2 simple zeros for $\theta \in [-\pi,\pi]$.
\end{proposition}
\begin{proof}
We begin by recalling some properties of the functions $ B_p$ and $C_p$. The corresponding proofs can be found in~\cite[Lemma 3.8]{FerrariFraschiniLoliPerugia2025} and~\cite[Cor.~5.4]{FerrariFraschini2026}:
\begin{equation} \label{eq:3.23}
\begin{aligned} 
     & \lim_{\theta \to 0} B_p(\theta) = \lim_{\theta \to 0}  C_p(\theta)  = 0 , \quad  C_p(-\pi)=C_p(\pi)=0, \quad B_p(\pi)=B_p(-\pi)<0,
    \\  & \lim_{\theta \to 0} B_p'(\theta) = 0, \quad \lim_{\theta \to 0} C_p'(\theta) = -1, \quad C_p(\theta)<0 \text{~for~} \theta \in (0,\pi), \quad B_p \text{~is even}, \quad C_p \text{~is odd}.
\end{aligned}
\end{equation}
We deduce that $\theta=0$ is a simple zero of~$K_p(\theta,\rho)$, as
\begin{equation*}
    \lim_{\theta \to 0} K_p(\theta,\rho) = 0, \quad \lim_{\theta \to 0} \partial_\theta K_p(\theta,\rho) = \rho.
\end{equation*}
It remains to show that, for all $\rho \in \R$, $K_p(\theta,\rho)=0$ has exactly one solution in $\theta \in [-\pi,0) \cup (0,\pi]$, and that such a zero is simple. For~$\theta \in (-\pi,0) \cup (0,\pi)$, we define the auxiliary function
\begin{equation} \label{eq:3.24}
    L_p(\theta,\rho) := \frac{K_p(\theta,\rho)}{C_p(\theta)} = \frac{B_p(\theta)}{C_p(\theta)} - \rho.
\end{equation}
The following limits are a direct consequence of~\eqref{eq:3.23}:
\begin{equation*}
    \lim_{\theta \to 0} L_p(\theta, \rho) = -\rho, \qquad \lim_{\theta \to -\pi^+} L_p(\theta, \rho) = -\infty, \qquad \lim_{\theta \to \pi^-} L_p(\theta, \rho) = +\infty.
\end{equation*}
Hence, the function $L_p(\theta, \rho)$ admits a continuous extension at $\theta = 0$. 

We now aim at proving that~$L_p(\theta, \rho)$ is strictly increasing on~$(-\pi, \pi)$. From this, we immediately deduce that the equation $L_p(\theta, \rho) = 0$ admits exactly one solution in $(-\pi, \pi)$, and that this zero is simple. Consequently, the same conclusion holds for $K_p(\theta, \rho)$. We now proceed to show the monotonicity of $L_p(\theta, \rho)$. Note that  $\partial_\theta L_p(\theta,\rho) = \partial_\theta (L_p(\theta,\rho)+\rho)$, and~$L_p(\theta,\rho)+\rho = B_p(\theta)/C_p(\theta)$ is an odd function in~$\theta$ that is independent of~$\rho$. Therefore, it is enough to study~$L_p(\theta,\rho)$ only for~$\theta \in (0,\pi)$.

For $k \in \mathbb{N}$ and $k \ge 2$, define the auxiliary functions $\widehat{U}_k: (0,\pi) \to \R$ as 
\begin{equation*}
    \widehat{U}_k(\theta) := \sum_{j \in \Z} \frac{1}{(\theta+2j\pi)^k},
\end{equation*}
and note that, using \eqref{eq:3.21}, for $\theta \in (0,\pi)$, it holds 
\begin{align*}
    \partial_\theta L_p(\theta,\rho) & = \partial_\theta  \left(\frac{\widehat{U}_{2p}(\theta)}{\widehat{U}_{2p+1}(\theta)}\right) = \frac{\widehat{U}'_{2p}(\theta)\widehat{U}_{2p+1}(\theta) - \widehat{U}_{2p}(\theta) \widehat{U}_{2p+1}'(\theta)}{(\widehat{U}_{2p+1}(\theta))^2}.
\end{align*}
Using the identity $\widehat{U}_k'(\theta) = -k \widehat{U}_{k+1}(\theta)$, we further obtain 
\begin{align*}
    \partial_\theta L_p(\theta,\rho) =  \frac{-2p(\widehat{U}_{2p+1}(\theta))^2 + (2p+1)\widehat{U}_{2p}(\theta) \widehat{U}_{2p+2}(\theta)}{(\widehat{U}_{2p+1}(\theta))^2}.
\end{align*}
It remains to show that, for all $\theta \in (0,\pi)$,
\begin{equation} \label{eq:3.25}
    2p\bigl(\widehat{U}_{2p+1}(\theta)\bigr)^2 - (2p+1) \widehat{U}_{2p}(\theta) \widehat{U}_{2p+2}(\theta) < 0.   
\end{equation}
It is shown in \cite[Lemma A.1]{FerrariFraschiniLoliPerugia2025} that $\widehat{U}_{2p+1}(\theta) < \theta \widehat{U}_{2p+2}(\theta)$ \footnote{In the notation of \cite{FerrariFraschiniLoliPerugia2025}, $\widehat{C}_p(\theta) = -\widehat{U}_{2p+1}(\theta)$ and $\widehat{B}_p(\theta) = -\widehat{U}_{2p}(\theta)$} and since $\widehat{U}_{2p+2}(\theta)>0$, then \eqref{eq:3.25} holds true provided that
\begin{equation*}
    2p (\theta^2 \widehat{U}_{2p+2}(\theta) - \widehat{U}_{2p}(\theta)) -  \widehat{U}_{2p}(\theta) < 0.   
\end{equation*}
This latter follows from $\widehat{U}_{2p}(\theta)>0$ and~\cite[Lemma 1]{EkstromFurciGaroniManniSerraCapizzanoSpeleers2018}, where it is shown $\theta^2 \widehat{U}_{2p+2}(\theta) < \widehat{U}_{2p}(\theta)$.\footnote{In the notation of \cite{EkstromFurciGaroniManniSerraCapizzanoSpeleers2018}, $e_p(\theta) = \widehat{U}_{2p}(\theta)/\widehat{U}_{2p+2}(\theta)$}
\end{proof}
\begin{example}
Using the expressions for $B_2(\theta)$ and $C_2(\theta)$ derived in Example \ref{ex:3.18}, we obtain
\begin{equation*}
    L_2(\theta,\rho) = 4 \Big(\frac{2-\cos\theta-\cos^2 \theta}{\sin \theta (5+\cos \theta)}\Big) - \rho, \quad \partial_\theta L_2(\theta,\rho) = 4 \Big(\frac{7 - 3\cos^2\theta - 4\cos^3\theta}{\sin^2\theta(5+\cos\theta)^2}\Big).
\end{equation*}
The polynomial $g(x) = 7 - 3x^2 -4x^3$ is strictly positive for all $x \in (-1, 1)$, thereby confirming that $L_2(\theta,\rho)$ is strictly increasing on $(-\pi,\pi)$ for all $\rho \in \R$. To visually show the presence of exactly two zeros, in Figure \ref{fig:2}, we display $K_2(\theta,\rho)$ as a function of $\theta$ for various values of $\rho$.

\begin{figure}[ht]
        \centering
        \includegraphics[width=0.6\linewidth]{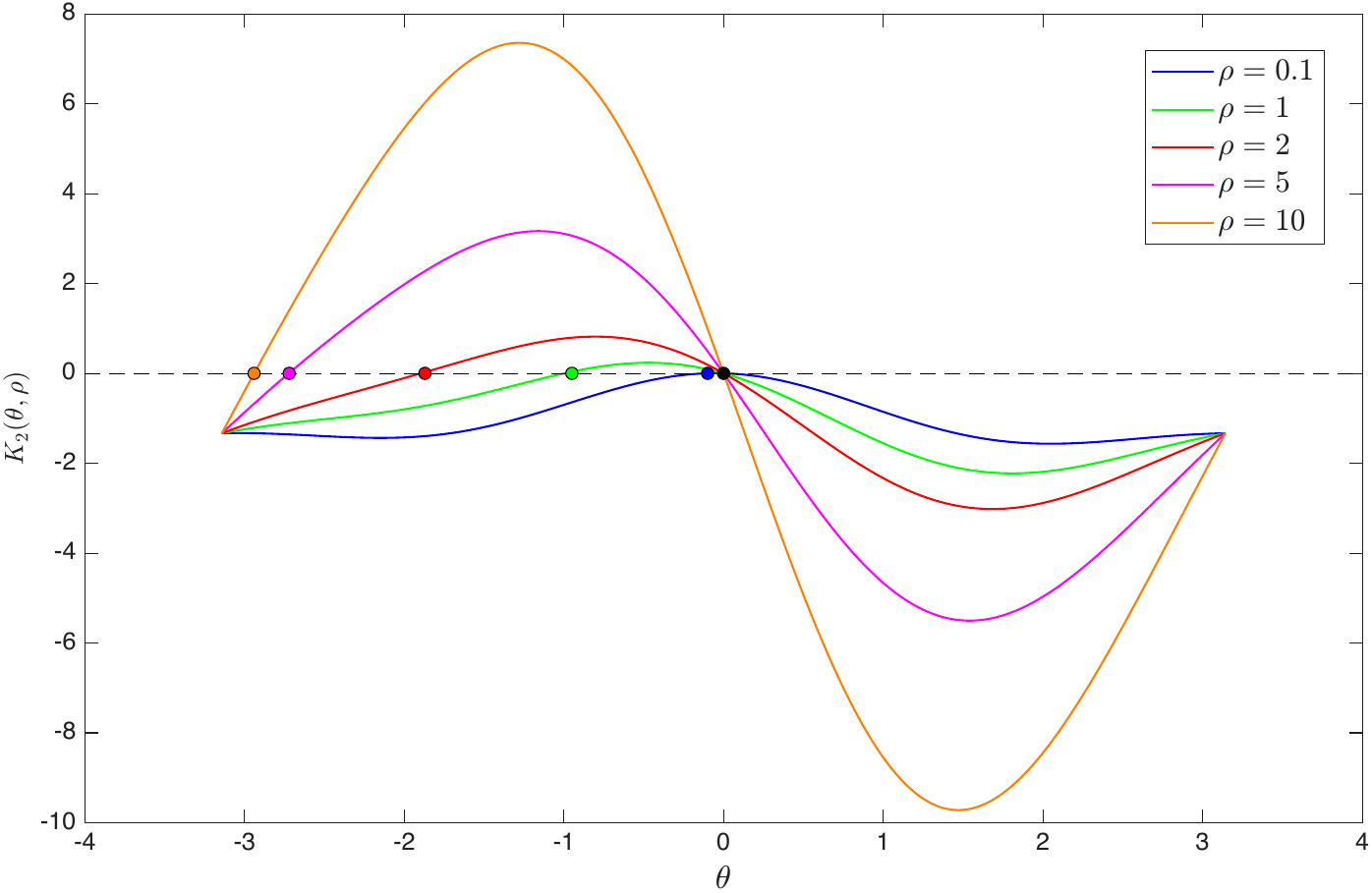}
        \caption{The function $K_2(\theta, \rho)$ for $\rho \in \{0.1, 1, 5, 10\}$. Each curve exhibits exactly two simple roots in~$[-\pi, \pi]$, one of which is at $\theta = 0$.}
        \label{fig:2}
\end{figure}
\end{example}
Combining Propositions \ref{prop:3.14} and \ref{prop:3.19}, we obtain the main result of the paper.
\begin{theorem}[Unconditional stability of~\eqref{eq:3.2}] \label{th:3.21}
Let Assumption \ref{assu:3.5} be satisfied. Then, for all $\rho \in \R$, the family of matrices~$\{\K_n^p(\rho)\}_n$ defined in~\eqref{eq:3.14} is weakly well-conditioned. Consequently, the variational formulation in \eqref{eq:3.2} is
unconditionally stable.
\end{theorem}
\begin{remark}[The Schr\"odinger and wave equations]
The discrete formulation~\eqref{eq:3.2} is unconditionally stable. This is in contrast to the space--time formulation for the wave equation analyzed in~\cite{FerrariFraschini2026}, which is only conditionally stable for all polynomial degrees. While a stabilization approach was proposed in~\cite{FraschiniLoliMoiolaSangalli2024} (and further analyzed in~\cite{FerrariFraschini2026}), an alternative formulation that achieves unconditional stability without requiring stabilization was introduced in~\cite{FerrariFraschiniLoliPerugia2025}. In the following section, we establish a strict connection between~\eqref{eq:3.2} and the method presented in~\cite{FerrariFraschiniLoliPerugia2025}.
\eremk
\end{remark}

In Figure~\ref{fig:3}, we present the condition numbers of the system matrices~$\{\K_n^p(\rho)\}_n$ 
with fixed size~$n=1000$, for~$p \in \{1, 2, 3, 4, 5, 6\}$, as the parameter~$\rho$ varies over the range~$[1, 100]$. As predicted by Theorem~\ref{th:3.21}, the condition numbers exhibit only mild dependence on~$\rho = \mu h_t$ and no CFL-type restrictions are observed or required to ensure stability.

\begin{figure}[ht]
        \centering
        \includegraphics[width=0.48\linewidth]{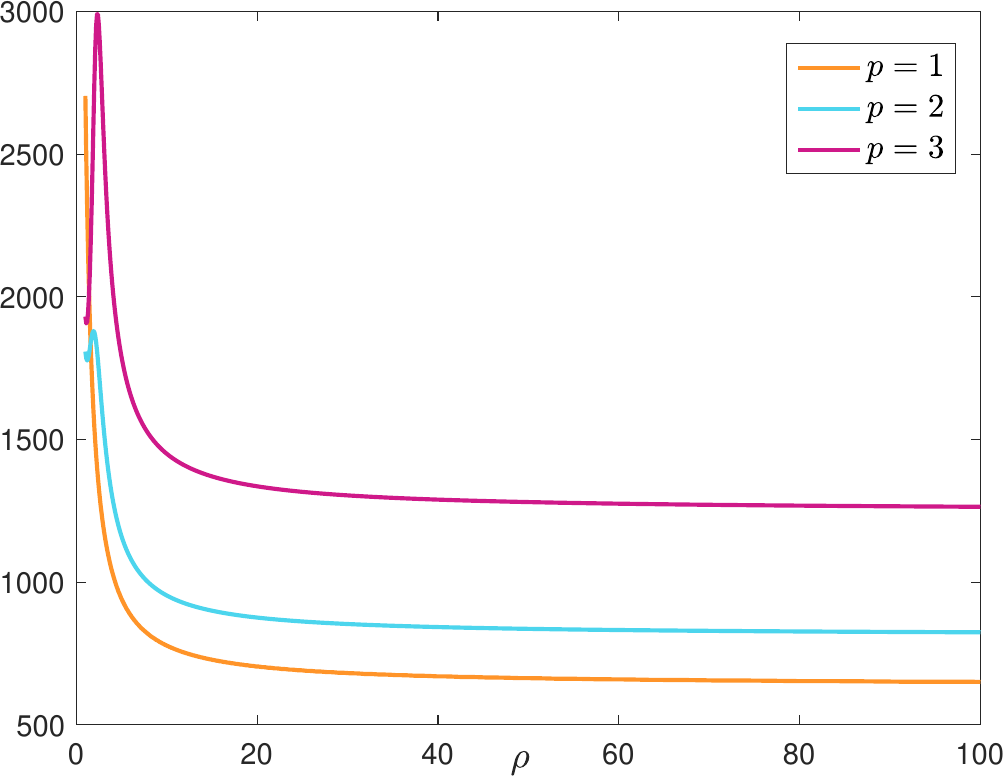}
        \includegraphics[width=0.495\linewidth]{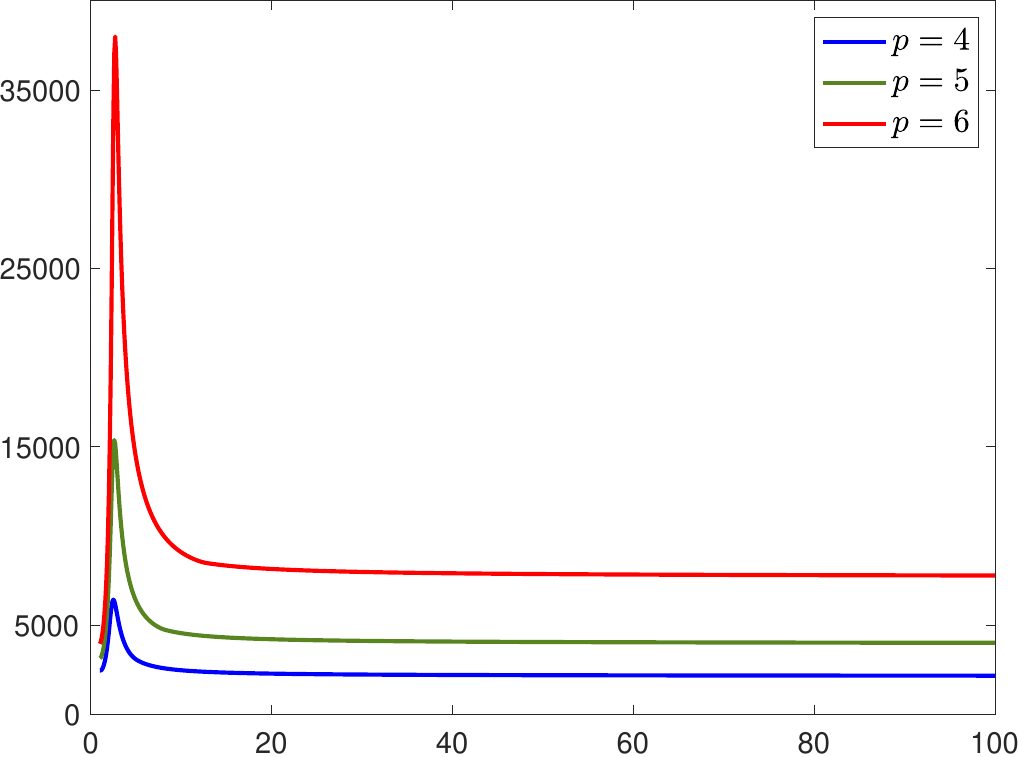}
        \caption{Spectral condition numbers of the system matrices~$\bigl\{\K_n^p(\rho)\bigr\}_n$ with fixed dimension~$n = 1000$, plotted as a function of the parameter~$\rho \in [1, 100]$. The results are shown for~$p \in \{1,2,3\}$ (left panel) and~$p \in \{4,5,6\}$ (right panel).}
        \label{fig:3}
\end{figure}

\section{First-order-in-time isogeometric method for the wave equation} \label{sec:4}

There is a natural algebraic connection between the variational formulation~\eqref{eq:3.2} and the first-order-in-time formulation  in~\cite[Eq. (6)]{FerrariFraschiniLoliPerugia2025} for the wave equation. In this section, we recall the ODE counterpart of the latter formulation, and exploit its connection with~\eqref{eq:3.2} to derive an alternative proof of its unconditional stability, which follows directly from Theorem~\ref{th:3.21}. We remark that the original proof of stability in~\cite{FerrariFraschiniLoliPerugia2025} is more involved and relies on a numerical verification, which has only been computed for spline spaces of degree up to~$p = 17$. Such a limitation is overcome in Theorem~\ref{th:4.4}, thus showing  that the method in~\cite{FerrariFraschiniLoliPerugia2025} is unconditionally stable for all~$p \in \N$.

Let $\mu_s \in \R$ denote the parameter associated with the IVP~\eqref{eq:3.1}, and let~$\mu_w > 0$ denote the corresponding parameter for the wave equation in its first-order-in-time formulation. Given $f \in L^2(0,T)$, we consider the following IVP: find $u, v \in H^1(0,T)$ such that

\begin{equation} \label{eq:4.1}
    \begin{cases}
    \dpt u(t) - v(t) = 0, & t \in (0,T), \\
    \dpt v(t) + \mu_w u(t) = f(t), & t \in (0,T), \\
    u(0) = 0, \quad v(0) = 0.
    \end{cases}
\end{equation}
\begin{remark}
Just as the initial value problem \eqref{eq:3.1} represents the ODE counterpart to the Schr\"odinger IBVP \eqref{eq:1.1}, system~\eqref{eq:4.1} corresponds to the following first-order-in-time formulation of the wave equation: given a source term $F : Q_T \to \R$, find the wave field $U : \overline{Q}_T \to \R$ and the velocity $V : \overline{Q}_T \to \R$ such that
\begin{equation*}
    \begin{cases}
        \partial_t U (\bx,t) - V(\bx,t) = 0, & (\bx,t) \in Q_T,
        \\ \partial_t V (\bx,t) - \Delta_{\bx} U(\bx,t) = F(\bx,t), & (\bx,t) \in Q_T,
        \\ U (\bx,t) = V(\bx,t) = 0 & (\bx,t) \in \partial \Omega \times J_T,
        \\ U(\bx,0) = V(\bx,0) = 0, & \bx \in \Omega.
    \end{cases}
\end{equation*}
\eremk
\end{remark}
The discrete variational formulation %
of~\eqref{eq:4.1} proposed in~\cite{FerrariFraschiniLoliPerugia2025} reads:
\begin{tcolorbox}[
    colframe=black!50!white,
    colback=blue!5!white,
    boxrule=0.5mm,
    sharp corners,
    boxsep=0.5mm,
    top=0.5mm,
    bottom=0.5mm,
    right=0.25mm,
    left=0.1mm
]
    \begingroup
    \setlength{\abovedisplayskip}{0pt}
    \setlength{\belowdisplayskip}{0pt}
    \, find~$u_{h_t}^p \in S_{h_t,0,\bullet}^p(0,T;\R)$ and $v_{h_t}^p \in S_{h_t,0,\bullet}^p(0,T;\R)$ such that \vspace{0.1cm}
\begin{equation} \label{eq:4.2}
	\begin{cases}
		(\dpt u^p_{h_t}, \dpt \chi_{h_{t}}^p)_{J_T} + (\dpt v_{h_t}^p, \chi^p_{h_t})_{J_T} = 0 & \text{for all~} \chi_{h_t}^p \in S_{h_t,\bullet,0}^p(0,T;\R),
		\\ (\partial_t v_{h_t}^p, \partial_t \lambda_{h_t}^p)_{J_T} - \mu_w (\partial_t u_{h_t}^p, \lambda_{h_{t}}^p)_{J_T} = (f,\partial_t \lambda_{h_{t}}^p)_{J_T} & \text{for all~} \lambda_{h_{t}}^p \in S_{h_{t},\bullet,0}^p(0,T;\R).
	\end{cases}
\end{equation}
    \endgroup
\end{tcolorbox}
In the next result, we highlight the connection between the variational formulations~\eqref{eq:3.2} and~\eqref{eq:4.2}.
\begin{proposition}[Relation between the discrete formulations~\eqref{eq:3.2} and~\eqref{eq:4.2}] \label{prop:41}
Let $f = f_r + \mi f_i$, with real-valued functions~$f_r$ and~$f_i$. Then,
\begin{equation*}
    \psi_{h_{t}}^p = u_{h_{t}}^p~+ \mi v_{h_{t}}^p \in~S_{h_{t}}^p(0,T;\C)
\end{equation*}
is a solution to problem~\eqref{eq:3.2} with $\Psi_0=0$ if and only if the real-valued functions $u_{h_{t}}^p \in S^p_{h_{t},0,\bullet}(0,T;\R)$ and $v_{h_{t}}^p \in S_{h_{t},0,\bullet}^p(0,T;\R)$ satisfy
\begin{equation} \label{eq:4.3}
\begin{cases}
    (\dpt u_{h_{t}}^p, \chi_{h_{t}}^{p-1})_{J_T} + \mu_s (v_{h_{t}}^p, \chi_{h_{t}}^{p-1})_{J_T} = (f_i, \chi_{h_{t}}^{p-1})_{J_T} & \text{for all}~\chi_{h_{t}}^{p-1} \in S_{h_{t}}^{p-1}(0,T;\R),
    \\ (\dpt v_{h_{t}}^p, \lambda_{h_{t}}^{p-1})_{J_T} - \mu_s (u_{h_{t}}^p, \lambda_{h_t}^{p-1})_{J_T} = - (f_r, \lambda_{h_{t}}^{p-1})_{J_T} & \text{for all}~\lambda_{h_{t}}^{p-1} \in S_{h_{t}}^{p-1}(0,T;\R),
\end{cases}
\end{equation}
or, equivalently,
\begin{equation} \label{eq:4.4}
\begin{cases}
    (\dpt u_{h_{t}}^p, \dpt \chi_{h_{t}}^p)_{J_T} - \mu_s (\dpt v_{h_{t}}^p, \chi_{h_{t}}^p)_{J_T} = (f_i, \dpt \chi_{h_{t}}^p)_{J_T} & \text{for all}~\chi_{h_{t}}^p \in S_{h_{t},\bullet,0}^p(0,T;\R),
    \\ (\dpt v_{h_{t}}^p, \dpt \lambda_{h_{t}}^p)_{J_T} + \mu_s (\dpt u_{h_{t}}^p, \lambda_{h_t}^p)_{J_T} = - (f_r, \dpt \lambda_{h_t}^p)_{J_T} & \text{for all}~\lambda_{h_t}^p \in S_{h_t,\bullet,0}^p(0,T;\R).
\end{cases}
\end{equation}
\end{proposition}
\begin{proof}
Setting $\psi_{h_t}^p = u_{h_t}^p + \mi v_{h_t}^p$, then problem \eqref{eq:3.2} is equivalent to: find $u_{h_t}^p \in S^p_{h_t,0,\bullet}(0,T;\R)$ and $v_{h_t}^p \in  S_{h_t,0,\bullet}^p(0,T;\R)$ such that
\begin{equation*}
    \mi (\dpt u_{h_t}^p, w_{h_t}^{p-1})_{J_T} - (\dpt v_{h_t}^p, w_{h_t}^{p-1})_{J_T} + \mu_s (u_{h_t}^p,  w_{h_t}^{p-1})_{J_T} + \mi \mu_s (v_{h_t}^p, w_{h_t}^{p-1})_{J_T} = (f_r, w_{h_t}^{p-1})_{J_T} + \mi (f_i, w_{h_t}^{p-1})_{J_T}
\end{equation*}
for all $w_{h_t}^{p-1} \in S_{h_t}^{p-1}(0,T;\R)$.  Finally, \eqref{eq:4.3} follows by taking the imaginary and real parts of this equation. 
\end{proof}
With the data $\mu_s=-1$, $\mu_w=1$, $f_r = -f$, and~$f_i=0$ problem \eqref{eq:4.4} coincides with the variational formulation \eqref{eq:4.2}. Therefore, we expect the two methods to exhibit similar stability properties. In fact, both methods turn out to be unconditionally stable. Here, we show that the stability of method~\eqref{eq:4.2} is a consequence of Theorem \ref{th:3.21}. Before, we state an auxiliary result.
\begin{lemma} \label{lem:4.3}
Consider an invertible $2 \times 2$ block matrix 
\begin{equation*}
\mathbf{M} =
    \begin{pmatrix}
        \B & -\bC \\
        \bC & \B
    \end{pmatrix}
\end{equation*}
with $\B, \bC \in \R^{n \times n}$, for some 
$n \in \mathbb{N}$. Suppose also that~$\B$ is invertible, and define the Schur complement
\begin{equation*} 
    \bS :=\B+\bC\mathbf{B}^{-1} \bC.
\end{equation*}
Then, the condition number of~$\bS$ satisfies the following bound:
\begin{equation*}
    \kappa (\bS)  \le \left(1 + \frac{\| \bC \|^2}{\| \B\|^2} \kappa(\B)\right) \kappa(\bM),
\end{equation*}
where $\| \cdot \|$ denotes either the $\| \cdot \|_\infty$ or the $\| \cdot \|_1$ norm, and the condition number is computed with respect to the same norm.
\end{lemma}

\begin{proof}
First, we compute 
\begin{equation} \label{eq:4.5}
    \| \bS \| \le \| \B \| + \| \B^{-1} \| \| \bC \|^2.
\end{equation}
Then, using the inversion formula from  \cite[\S 0.8.5]{HornJohnson2013}, we get 
\begin{equation*}
\bM^{-1} = 
\begin{pmatrix}
    \B^{-1} - \B^{-1} \bC \bS^{-1} \bC \B^{-1} & \B^{-1} \bC \bS^{-1} \\
    -\bS^{-1} \bC \B & \bS^{-1}
\end{pmatrix}.
\end{equation*}
Being a subpart of the full matrix, we deduce $\| \bS^{-1} \| \le \| \bM^{-1} \|$. Similarly, we also know that~$\| \B \| \le \| \bM\|$. Combining these two estimates with~\eqref{eq:4.5}, we conclude
\begin{align*}
    \kappa (\bS) = \| \bS \| \| \bS^{-1} \| \le (\| \B \| + \| \B^{-1} \| \| \bC \|^2) \| \bM^{-1} \| & \le \left(\| \B \| + \| \bC \|^2 \frac{\kappa(\B)}{\| \B \|}\right) \frac{\kappa(\bM)}{\| \bM \|} 
    \\ & \le \left(1 + \frac{\| \bC \|^2}{\| \B\|^2} \kappa(\B)\right) \kappa(\bM),
\end{align*}
which completes the proof.
\end{proof}
\begin{theorem}[Unconditional stability of~\eqref{eq:4.2}] \label{th:4.4}
Let Assumption~\ref{assu:3.5} be satisfied. Then, the discrete formulation \eqref{eq:4.2} is unconditionally stable.
\end{theorem}
\begin{proof}
Recalling the definitions of $\B_{h_t}^p$ and $\bC_{h_t}^p$ from~\eqref{eq:3.10}, we observe that the system matrices associated with formulations \eqref{eq:4.4} and \eqref{eq:4.2} are, respectively,
\begin{equation} \label{eq:4.6}
\begin{pmatrix}
    \B_{h_t}^p & -\mu_s \bC_{h_t}^p \\
    \mu_s \bC_{h_t}^p & \B_{h_t}^p 
\end{pmatrix} 
\quad\quad\text{and}\quad\quad 
\begin{pmatrix}
    \B_{h_t}^p &  \bC_{h_t}^p \\
    -\mu_w \bC_{h_t}^p & \B_{h_t}^p 
\end{pmatrix}.
\end{equation}
In~\cite[Prop.~3.10]{FerrariFraschiniLoliPerugia2025}, it was shown that the matrices~$\{\B_{h_t}^p\}_{h_t}$ are invertible. Therefore, the Schur complements
\begin{equation*} 
     \B_{h_t}^p + \mu_s^2 \bC_{h_t}^p (\B_{h_t}^p)^{-1} \bC_{h_t}^p \quad \text{and} \quad \B_{h_t}^p + \mu_w \bC_{h_t}^p (\B_{h_t}^p)^{-1} \bC_{h_t}^p,
\end{equation*}
associated with the matrices in \eqref{eq:4.6} are both well defined. Theorem \ref{th:3.21} guarantees the stability of the variational formulation~\eqref{eq:3.2}, and, consequently, that of~\eqref{eq:4.4}. Moreover, Lemma \ref{lem:4.3} implies that the corresponding family of Schur complements is weakly well-conditioned for all $\mu_s \in \R$. This follows from the fact that $\| \bC_{h_t}^p\|$ depends only on $p$ (see property~\ref{P1} in Proposition~\ref{prop:3.8}), and that the family~$\{\B_{h_t}^p\}_{h_t}$ is weakly well-conditioned (see~\cite[Prop.~3.10]{FerrariFraschiniLoliPerugia2025}). By choosing~$\mu_s = \sqrt{\mu_w}$, the Schur complements for both matrices in~\eqref{eq:4.6} coincide. Hence, the weakly well-conditioning of these Schur complements implies the stability of formulation~\eqref{eq:4.2}.
\end{proof}
\begin{remark}[Improvement of the stability result in~\cite{FerrariFraschiniLoliPerugia2025}]
In \cite{FerrariFraschiniLoliPerugia2025}, the unconditional stability of the discrete variational formulation~\eqref{eq:4.2} was studied by analyzing the condition number of the associated Schur complements. However, checking the hypotheses of Theorem \ref{th:3.11} for the perturbed blocks  relied on numerical computations, which restricted the stability result to splines of degree~$p \le 17$ (see \cite[\S3.3]{FerrariFraschiniLoliPerugia2025}). The algebraic connection with the Schr\"odinger formulation~\eqref{eq:3.2} allows us to bypass this constraint. Consequently, Theorem~\ref{th:4.4} now provides a rigorous proof of unconditional stability for any $p \in \N$, without relying on numerical verifications.
\eremk
\end{remark}

\section{Numerical results} \label{sec:5}
In this section, we present an efficient implementation strategy for the discretization of the space--time method~\eqref{eq:2.6} (Section~\ref{sec:51}). Subsequently, using an isogeometric discretization also in space, 
we validate the theoretical stability results established in this work and demonstrate the quasi-optimal $h$-convergence and conservation properties of the proposed method (Section~\ref{sec:5.2}).\footnote{All numerical test are performed with MATLAB R2025b. The codes used for the numerical tests are available in the GitHub repository \cite{FerrariCodes2025}.}

\subsection{Solver} \label{sec:51}
To express the matrix form of the discrete variational formulation of~\eqref{eq:3.2}, we  denote by~$\bC^p_{h_t}$ and~$\B^p_{h_t}$ the temporal matrices introduced in~\eqref{eq:3.10}. Likewise, let~$\bM_{h_{\bx}}$ and~$\A_{h_{\bx}}$ denote the spatial mass matrix and the matrix representing the discretization of the Hamiltonian operator, respectively. The resulting global linear system takes the form
\begin{equation} \label{eq:5.1}
    (\mi \B_{h_t}^p \otimes \M_{h_{\bx}} + \bC_{h_t}^p \otimes \A_{h_{\bx}}) \boldsymbol{\Psi}^p_{\bh} = \mathbf{F}_{\bh}^p
\end{equation}
where~$\otimes$ denotes the Kronecker product. The vector $\boldsymbol{\Psi}_{\bh}^p$ collects the coefficients of the discrete unknown $\Psi_{\bh}^p$ with respect to the chosen basis of
$S_{h_t,0,\bullet}^p(0,T) \otimes S_{h_{\bx}}(\Omega)$ ordered appropriately. Similarly, $\mathbf{F}^p_{\bh}$ represents the vector form of the term on the right-hand side of~\eqref{eq:2.6}.

We now present an efficient algorithm for solving the linear system \eqref{eq:5.1}. This procedure is essentially a variant of the \emph{Bartels–Stewart method} \cite{BartelsStewart1972}, which relies on the complex Schur decomposition of the matrix $\mi (\B_{h_t}^p)^{-1} \bC_{h_t}^p$. Specifically, the involved decomposition reads
\begin{equation*}
    (\mathbf{Q}_{h_t}^p)^H (\mi \B_{h_t}^p)^{-1} \bC_{h_t}^p \mathbf{Q}_{h_t}^p = \mathbf{R}_{h_t}^p,
\end{equation*}
where~$\mathbf{Q}_{h_t}^p$ is a unitary matrix, the superscript~$H$ denotes the conjugate transpose, and~$\mathbf{R}_{h_t}^p$ is an upper triangular matrix.

The main steps of the algorithm are summarized in Algorithm~\ref{alg:1} (see also \cite{FerrariFraschiniLoliPerugia2025, LangerZank2021, Tani2017} for similar algorithms).

\begin{algorithm}[ht!]
\caption{Efficient implementation for solving the system \eqref{eq:5.1}} \label{alg:1}
\begin{algorithmic}[1]
    \State{\textbf{Compute} the complex Schur decomposition $(\mathbf{Q}_{h_t}^p,\mathbf{R}_{h_t}^p)$ of $(\mi \B_{h_t}^p)^{-1} \bC_{h_t}^p$}
    \vspace{0.1cm}
    \State{\textbf{Solve} for $\boldsymbol{Y}_{\hspace{-0.09cm}\bh}^p$ the system $\left({\B}^{p}_{h_t} \otimes \mathbf{I}_{h_{\bx}} \right) \boldsymbol{Y}_{\hspace{-0.09cm} \bh}^p = \mathbf{F}_{\bh}^p$}
    \vspace{0.1cm}
    \State{\textbf{Update} $\boldsymbol{Y}_{\hspace{-0.09cm} \bh}^p \leftarrow \left((\mathbf{Q}_{h_t}^p)^H \otimes \mathbf{I}_{h_{\bx}}\right) \boldsymbol{Y}_{\hspace{-0.09cm} \bh}^p$}
    \vspace{0.1cm}
    \State{\textbf{Solve} for $\boldsymbol{Z}_{\hspace{-0.04cm} \bh}^p$ the system $\left(\mathbf{I}_{h_t} \otimes \M_{h_{\bx}} + \mathbf{R}^p_{h_t} \otimes \A_{h_{\bx}} \right) \boldsymbol{Z}_{\bh}^p =\boldsymbol{Y}_{\bh}^p$}
    \vspace{0.1cm}
    \State{\textbf{Compute} the final solution~$\boldsymbol{\Psi}_{\bh}^p = 
    (\mathbf{Q}_{h_t}^p \otimes \mathbf{I}_{h_{\bx}}) \boldsymbol{Z}_{\bh}^p$}
\end{algorithmic}
\end{algorithm}

In particular, \textit{step} 2 only requires solving a set of completely independent small linear systems with the same matrix~$\B_{h_t}^p$, \textit{steps} 3 and 5 involve just matrix-vector products, and the matrix of the linear system in \textit{step} 4 has a block upper triangular structure. For a detailed analysis of the computational cost associated with this algorithm, we refer the reader to~\cite[\S 5.1]{FerrariFraschiniLoliPerugia2025}.

\subsection{Numerical experiments} \label{sec:5.2}
We now consider some numerical experiments on the $(1+1)$-dimensional cylinder $Q_T = \Omega \times (0,1)$ with $\Omega = (-3,3)$. The exact solution is chosen from the following well-known family (see, e.g., \cite[\S2.3]{Griffiths1995}):
\begin{equation} \label{eq:5.2}
    \Psi^{(n)}(x,t) = \frac{1}{\sqrt{2^n n!}} \left( \frac{\omega}{\pi} \right)^{1/4}
    H_n\left( \sqrt{\omega} x \right) \exp\left( -\frac{1}{2} (\omega x^2 + (2n+1)i \omega t) \right), \quad n \in \mathbb{N},
\end{equation}
where $H_n(\cdot)$ is the $n$th physicist's Hermite polynomial, as defined in \cite[Table 18.3.1]{OlverFrankLozeirClark2010}. These functions are exact solutions of the Schrödinger equation with potential $V(x) = -\omega^2 x^2/2$, modeling a quantum harmonic oscillator with angular frequency $\omega > 0$. The source term is $F(x,t) = 0$, and the initial condition is given by $\Psi_0(x) = \Psi^{(n)}(x,0)$. In our tests, we fix $\omega = 10$ and~$n = 2$.

For the space–time Galerkin discretization \eqref{eq:2.6}, we employ maximal regularity splines over uniform meshes for both the spatial and temporal domains. The discrete spaces~$S_{h_x}(\Omega)$ and~$S_{h_t}^{p_t}(0,T)$ use the same polynomial degree~$p_x = p_t = p$, but we allow for different meshsizes~$h_x$ and $h_t$. The initial condition is approximated by $\Pi_x \Psi_0$, which denotes the~$L^2(\Omega)$ projection of~$\Psi_0$ onto~$S_{h_x}(\Omega)$.

We compute the relative errors in the following norms
\begin{equation} \label{eq:5.3}
\begin{aligned}
     \| \Psi \|_{H^1(Q_T)} := \| \partial_t \Psi \|_{L^2(Q_T)} + \| \partial_x \Psi \|_{L^2(Q_T)} \quad\quad \| \Psi \|_{L^2(Q_T)} := \left(\int_0^T \int_\Omega |\Psi(x,t)|^2 \, \dd x \, \dd t\right)^{\frac{1}{2}}.
    \end{aligned}
\end{equation}

\paragraph{Stability.}
To verify the unconditional stability of the method (i.e., absence of a CFL-type restriction such as %
$h_t \le C h_x^2$), we fix~$h_t = 0.0625$ and progressively decrease~$h_x$. Figure~\ref{fig:4} shows the relative $L^2(Q_T)$ and $H^1(Q_T)$ errors for different polynomial degrees. The results confirm that the method remains stable and free of spurious oscillations even for highly refined spatial meshes.

\begin{figure}[ht]
\centering
\begin{minipage}{0.475\textwidth}
\begin{tikzpicture}
\begin{groupplot}[group style={group size=1 by 1},height=7cm,width=8cm, every axis label={font=\normalsize}, ylabel style={font=\footnotesize}]
\nextgroupplot[xtick={1,10,100},
                xmode=log,
                ytick={0.0316,0.1,0.00316227766,0.01,0.001},
                xlabel={$h_t/h_x$},
                ymode=log,
                title=$H^1$-error,
                legend pos=south west, 
                legend style={nodes={scale=1, transform shape}},]
    \pgfplotstableread{
        h  error
        1     0.047478679385498
        2     0.046977801834104
        4     0.046950331310437
        8     0.046947589949453
        16    0.046947412084121
        32    0.046947389678381
        64    0.046947388387963
        128   0.046947388423246
        } \datatable
        \addplot[mark=pentagon*, magenta,mark size=2.5] table[x=h, y=error] \datatable;
        \pgfplotstableread{
        h  error
        1     0.010862578144316
        2     0.010861779396991
        4     0.010861795437006
        8     0.010861793386087
        16    0.010861793384487
        32    0.010861793383401
        64    0.010861793373255
        128   0.010861793346100
        } \datatable
        \addplot[mark=square*, blue] table[x=h, y=error] \datatable;
        \pgfplotstableread{
        h  error
        1     0.003403944482192
        2     0.003403926295218
        4     0.003403926692972
        8     0.003403926674471
        16    0.003403926674410
        32    0.003403926674194
        64    0.003403926674115
        128   0.003403926675144
        } \datatable
        \addplot[mark=diamond*, darkgreen, mark size=3] table[x=h, y=error] \datatable;
    \end{groupplot}
    \end{tikzpicture}
\end{minipage}
\begin{minipage}{0.475\textwidth}
\begin{tikzpicture}
\begin{groupplot}[group style={group size=1 by 1},height=7cm,width=8cm, every axis label={font=\normalsize}, ylabel style={font=\footnotesize}]
\nextgroupplot[xtick={1,10,100},
                xmode=log,
                ytick={0.1,0.01,0.00316227766,0.001},
                xlabel={$h_t/h_{\bx}$},
                ymode=log,
                title=$L^2$-error,
                legend pos=south west, 
                legend style={nodes={scale=1, transform shape}},]
        \pgfplotstableread{
        h  error
        1     0.028198785291818              
        2     0.028197738002712
        4     0.028197742726550
        8     0.028197751879357
        16    0.028197751884904
        32    0.028197751885363
        64    0.028197751908872 
        128   0.028197751937356
        } \datatable
        \addplot[mark=pentagon*, magenta,mark size=2.5] table[x=h, y=error] \datatable;
        \pgfplotstableread{
        h  error
        1     0.004813540302450               
        2     0.004813441142813
        4     0.004813441543200
        8     0.004813441674147
        16    0.004813441673795
        32    0.004813441671699
        64    0.004813441653773
        128   0.004813441600993
        } \datatable
        \addplot[mark=square*, blue, mark size=2] table[x=h, y=error] \datatable;
        \pgfplotstableread{
        h  error
        1     0.001293352388967
        2     0.001293345993245
        4     0.001293346025270
        8     0.001293346026351
        16    0.001293346026253 
        32    0.001293346025800
        64    0.001293346025642
        128   0.001293346027654
        } \datatable
        \addplot[mark=diamond*, darkgreen, mark size=3] table[x=h, y=error] \datatable;
    \end{groupplot}
    \end{tikzpicture}
\end{minipage}
\caption{Relative errors in the norms defined in \eqref{eq:5.3}, with maximal regularity splines in both space and time for polynomial degrees~$p=3$ ({\footnotesize{\textcolor{magenta}{$\pentagofill$}}} marker), $p=4$ ({\footnotesize{\textcolor{blue}{\(\blacksquare\)}}} marker) and $p=5$ ({\footnotesize{\textcolor{darkgreen}{\(\blacklozenge\)}}} marker), and the exact solution as in \eqref{eq:5.2} with $\omega=10$ and $n=2$. Here,~$h_t = 0.0625$ and~$h_{\bx}$ decreases.}
\label{fig:4}
\end{figure}

\paragraph{Convergence.}
In Figure \ref{fig:5}, we present the relative errors obtained for various degrees of approximation, and a sequence of tensor-product meshes with~$h = h_{\bx} = h_t = 2^{-j}$, $j = 4, \ldots, 8$. We observe optimal convergence rates of order~$\mathcal{O}(h^p)$ in the $H^1(\QT)$ norm and~$\mathcal{O}(h^{p+1})$ in the~$L^2(\QT)$ norm, which is consistent with the theoretical approximation properties of maximal regularity B-splines of degree~$p$.

\begin{figure}[ht]
\centering
\begin{minipage}{0.475\textwidth}
\begin{tikzpicture}
\begin{groupplot}[group style={group size=2 by 1},height=7cm,width=8cm, every axis label={font=\normalsize}]
        
    \nextgroupplot[xmode=log, 
                log x ticks with fixed point,
                xtick={0.01,0.05},
                title = {$H^1$-error},
                ytick={10,0.1,0.001,0.00001,0.0000001,0.000000001},
                xlabel={$h_t=h_{\bx}$},
                ymode=log,
                legend pos=south east, 
                legend style={nodes={scale=1, transform shape}, font=\footnotesize},]
        \pgfplotstableread{
            x       y
            0.0625    1.357035020764602
            0.03125   0.638385654416084
            0.015625  0.200076354447084
            0.0078125 0.068247605954625
            0.00390625    0.027784645946226
        } \Lduepdue
    \addplot[orange, mark=*, mark options={scale=1}, line width=0.02cm] table[x=x, y=y] \Lduepdue;
        \pgfplotstableread{
            x       y
        0.0625    0.253857701252942
        0.03125   0.023835358579326
        0.015625  0.005068740777673
        0.0078125 0.001225739435411
        0.00390625    0.000302970081334
        } \Hunopdue     
        \addplot[cyan, mark=triangle*, mark options={scale=1.5}, line width=0.02cm] table[x=x, y=y] \Hunopdue;
        \pgfplotstableread{
            x       y
        0.0625    0.041845120620108
        0.03125   0.002753359392370
        0.015625  0.000304954380809
        0.0078125 0.000036781044597
        0.00390625    0.000004539663008
        } \Hduepdue
        \addplot[magenta, mark=pentagon*, mark options={scale=1.5}, line width=0.02cm] table[x=x, y=y] \Hduepdue;
        \pgfplotstableread{
            x       y
        0.0625    0.010862578144316
        0.03125   0.000365687608586
        0.015625  0.000018802145708
        0.0078125 0.000001114806952
        0.00390625    0.000000068042421
        } \Hduepdue
        \addplot[blue, mark=square*, mark options={scale=1.25}, line width=0.02cm] table[x=x, y=y] \Hduepdue;
        \pgfplotstableread{
            x       y
        0.0625    0.003403944482192
        0.03125   0.000050923065959
        0.015625  0.000001231036962 
        0.0078125 0.000000035474012
        0.00390625 0.000000001081432
        } \Hduepdue
        \addplot[darkgreen, mark=diamond*, mark options={scale=1.5}, line width=0.02cm] table[x=x, y=y] \Hduepdue;

        \logLogSlopeTriangle{0.26}{0.16}{0.725}{1}{orange};        
        \logLogSlopeTriangle{0.265}{0.16}{0.565}{2}{cyan};        
        \logLogSlopeTriangle{0.25}{0.13}{0.39}{3}{magenta};
        \logLogSlopeTriangle{0.25}{0.12}{0.25}{4}{blue};
        \logLogSlopeTriangle{0.25}{0.13}{0.09}{5}{darkgreen};
        
    \end{groupplot}
\end{tikzpicture}
\end{minipage}
\begin{minipage}{0.475\textwidth}
\begin{tikzpicture}
\begin{groupplot}[group style={group size=2 by 1},height=7cm,width=8cm, every axis label={font=\normalsize}]
        
    \nextgroupplot[xmode=log, 
                log x ticks with fixed point,
                xtick={0.01,0.05},
                title = {$L^2$-error},
                ytick={10,0.1,0.001,0.00001,0.0000001,0.000000001,0.00000000001},
                xlabel={$h_t=h_{\bx}$},
                ymode=log,
                legend pos=south east, 
                legend style={nodes={scale=1, transform shape}},]
        \pgfplotstableread{
            x       y
            0.0625     1.407040000863541
            0.03125    0.621971142145927
            0.015625   0.173435635185756
            0.0078125  0.044372238775190
            0.00390625 0.011154343161900
        } \Lduepdue
    \addplot[orange, mark=*, mark options={scale=1}, line width=0.02cm] table[x=x, y=y] \Lduepdue;
        \pgfplotstableread{
            x       y
        0.0625     0.230630481143431
        0.03125    0.009804855065121
        0.015625   0.000455993165612
        0.0078125  0.000059247738199
        0.00390625 0.000005667186152
        } \Hunopdue     
        \addplot[cyan, mark=triangle*, mark options={scale=1.5}, line width=0.02cm] table[x=x, y=y] \Hunopdue;
        \pgfplotstableread{
            x       y
        0.0625     0.028198785291818
        0.03125    0.000608602118236
        0.015625   0.000022335843227
        0.0078125  0.000001329167787
        0.00390625 0.000000081403195
        } \Hduepdue
        \addplot[magenta, mark=pentagon*, mark options={scale=1.5}, line width=0.02cm] table[x=x, y=y] \Hduepdue;
        \pgfplotstableread{
            x       y
        0.0625     0.004813540302450
        0.03125    0.000059247738199
        0.015625   0.000001455341447
        0.0078125  0.000000042127722
        0.00390625 0.000000001285164
        } \Hduepdue
        \addplot[blue, mark=square*, mark options={scale=1.1}, line width=0.02cm] table[x=x, y=y] \Hduepdue;
        \pgfplotstableread{
            x       y
        0.0625     0.001293352388967
        0.03125    0.000008230415161
        0.015625   0.000000092087549
        0.0078125  0.000000001306525
        0.00390625 0.000000000019711
        } \Hduepdue
        \addplot[darkgreen, mark=diamond*, mark options={scale=1.5}, line width=0.02cm] table[x=x, y=y] \Hduepdue;
                  
        \logLogSlopeTriangle{0.25}{0.2}{0.72}{2}{orange};        
        \logLogSlopeTriangle{0.27}{0.15}{0.49}{3}{cyan};        
        \logLogSlopeTriangle{0.25}{0.13}{0.36}{4}{magenta};
        \logLogSlopeTriangle{0.25}{0.12}{0.235}{5}{blue};
        \logLogSlopeTriangle{0.25}{0.13}{0.09}{6}{darkgreen};
        
    \end{groupplot}
\end{tikzpicture}
\end{minipage}
\caption{Relative errors in the~$H^1(\QT)$ norm (left plot) and the~$L^2(\QT)$ norm (right plot) for varying~$h_t = h_{\bx}$ with maximal regularity splines in space and time with polynomial degree~$p=1$ ({\large{\textcolor{orange}{$\bullet$}}}~marker), $p=2$ ({\footnotesize{\textcolor{cyan}{$\blacktriangle$}}} marker), $p=3$ ({\footnotesize{\textcolor{magenta}{$\pentagofill$}}} marker), $p=4$ ({\footnotesize{\textcolor{blue}{\(\blacksquare\)}}} marker) and $p=5$ ({\footnotesize{\textcolor{darkgreen}{\(\blacklozenge\)}}} marker). The exact solution is as in~\eqref{eq:5.2} with~$\omega=10$ and~$n=2$.}
\label{fig:5}
\end{figure}

\begin{figure}[ht]
\centering
\begin{minipage}{0.475\textwidth}
\begin{tikzpicture}
\begin{groupplot}[group style={group size=1 by 1},height=7cm,width=8cm, every axis label={font=\normalsize}, ylabel style={font=\footnotesize}]
        
    \nextgroupplot[xtick={0,0.2,0.4,0.6,0.8,1},
                ytick={0.1,0.001,0.00001,0.0000001,0.000000001,0.00000000001,0.00000000001,0.0000000000001,0.000000000000001},
                xlabel={$t$},
                ymode=log,
                title=$|\mathcal{M}_{\Psi_{\bh}}(t) - \mathcal{M}_{\Psi_{\bh}}(0)|$,
                legend pos=south west, 
                legend style={nodes={scale=1, transform shape}},]
    \pgfplotstableread{
        t       p2              p3              p4              p5           
                           0                   0                   0                   0                   0
   0.015625000000000   0.000414811616056   0.000058313777224   0.000000602750106   0.000000248919764
   0.031250000000000   0.000322552939036   0.000070150990864   0.000001866100030   0.000000265756105
   0.046875000000000   0.000336518724560   0.000065308370525   0.000000737367920   0.000000255051424
   0.062500000000000   0.000336497203784   0.000066240542326   0.000001486084800   0.000000247688008
   0.078125000000000   0.000335515846916   0.000066435045689   0.000001079723282   0.000000261114660
   0.093750000000000   0.000335979153897   0.000066112646742   0.000001259540676   0.000000249038810
   0.109375000000000   0.000335830815792   0.000066325407382   0.000001201630646   0.000000257352917
   0.125000000000000   0.000335867724006   0.000066220797917   0.000001206275618   0.000000252650538
   0.140625000000000   0.000335860888054   0.000066262224369   0.000001218330042   0.000000254769851
   0.156250000000000   0.000335861484425   0.000066249574329   0.000001205341043   0.000000254165805
   0.171875000000000   0.000335861689121   0.000066251687606   0.000001214462127   0.000000254050615
   0.187500000000000   0.000335861550464   0.000066252391656   0.000001209349995   0.000000254400772
   0.203125000000000   0.000335861601179   0.000066251458760   0.000001211689102   0.000000254056566
   0.218750000000000   0.000335861587162   0.000066252048371   0.000001210887059   0.000000254307488
   0.234375000000000   0.000335861590160   0.000066251765754   0.000001210994541   0.000000254157393
   0.250000000000000   0.000335861589747   0.000066251874946   0.000001211119752   0.000000254230557
   0.265625000000000   0.000335861589731   0.000066251842876   0.000001210967905   0.000000254205143
   0.281250000000000   0.000335861589769   0.000066251847464   0.000001211079021   0.000000254206207
   0.296875000000000   0.000335861589752   0.000066251849839   0.000001211014915   0.000000254214408
   0.312500000000000   0.000335861589756   0.000066251847157   0.000001211045179   0.000000254204956
   0.328125000000000   0.000335861589753   0.000066251848784   0.000001211034220   0.000000254212367
   0.343750000000000   0.000335861589753   0.000066251848020   0.000001211036173   0.000000254207670
   0.359375000000000   0.000335861589751   0.000066251848306   0.000001211037415   0.000000254210122
   0.375000000000000   0.000335861589750   0.000066251848225   0.000001211035656   0.000000254209148
   0.390625000000000   0.000335861589749   0.000066251848235   0.000001211037004   0.000000254209321
   0.406250000000000   0.000335861589750   0.000066251848244   0.000001211036203   0.000000254209492
   0.421875000000000   0.000335861589751   0.000066251848239   0.000001211036594   0.000000254209243
   0.437500000000000   0.000335861589753   0.000066251848246   0.000001211036446   0.000000254209457
   0.453125000000000   0.000335861589756   0.000066251848248   0.000001211036477   0.000000254209318
   0.468750000000000   0.000335861589760   0.000066251848251   0.000001211036491   0.000000254209393
   0.484375000000000   0.000335861589763   0.000066251848251   0.000001211036465   0.000000254209365
   0.500000000000000   0.000335861589766   0.000066251848251   0.000001211036490   0.000000254209370
   0.515625000000000   0.000335861589770   0.000066251848250   0.000001211036463   0.000000254209363
   0.531250000000000   0.000335861589772   0.000066251848247   0.000001211036488   0.000000254209391
   0.546875000000000   0.000335861589773   0.000066251848243   0.000001211036473   0.000000254209315
   0.562500000000000   0.000335861589773   0.000066251848241   0.000001211036440   0.000000254209454
   0.578125000000000   0.000335861589774   0.000066251848234   0.000001211036588   0.000000254209242
   0.593750000000000   0.000335861589772   0.000066251848239   0.000001211036198   0.000000254209491
   0.609375000000000   0.000335861589769   0.000066251848230   0.000001211036998   0.000000254209321
   0.625000000000000   0.000335861589765   0.000066251848222   0.000001211035652   0.000000254209150
   0.640625000000000   0.000335861589763   0.000066251848306   0.000001211037412   0.000000254210125
   0.656250000000000   0.000335861589762   0.000066251848022   0.000001211036170   0.000000254207674
   0.671875000000000   0.000335861589762   0.000066251848790   0.000001211034218   0.000000254212373
   0.687500000000000   0.000335861589766   0.000066251847167   0.000001211045175   0.000000254204962
   0.703125000000000   0.000335861589764   0.000066251849850   0.000001211014911   0.000000254214415
   0.718750000000000   0.000335861589785   0.000066251847476   0.000001211079016   0.000000254206213
   0.734375000000000   0.000335861589752   0.000066251842889   0.000001210967897   0.000000254205148
   0.750000000000000   0.000335861589774   0.000066251874959   0.000001211119742   0.000000254230560
   0.765625000000000   0.000335861590192   0.000066251765767   0.000001210994529   0.000000254157396
   0.781250000000000   0.000335861587199   0.000066252048384   0.000001210887045   0.000000254307488
   0.796875000000000   0.000335861601218   0.000066251458772   0.000001211689086   0.000000254056565
   0.812500000000000   0.000335861550505   0.000066252391667   0.000001209349978   0.000000254400770
   0.828125000000000   0.000335861689161   0.000066251687616   0.000001214462111   0.000000254050611
   0.843750000000000   0.000335861484463   0.000066249574339   0.000001205341026   0.000000254165800
   0.859375000000000   0.000335860888088   0.000066262224381   0.000001218330026   0.000000254769847
   0.875000000000000   0.000335867724035   0.000066220797931   0.000001206275603   0.000000252650534
   0.890625000000000   0.000335830815817   0.000066325407397   0.000001201630631   0.000000257352913
   0.906250000000000   0.000335979153919   0.000066112646761   0.000001259540663   0.000000249038806
   0.921875000000000   0.000335515846932   0.000066435045713   0.000001079723269   0.000000261114658
   0.937500000000000   0.000336497203799   0.000066240542353   0.000001486084788   0.000000247688006
   0.953125000000000   0.000336518724573   0.000065308370558   0.000000737367909   0.000000255051423
   0.968750000000000   0.000322552939052   0.000070150990899   0.000001866100018   0.000000265756105
   0.984375000000000   0.000414811616076   0.000058313777261   0.000000602750092   0.000000248919766
   1.000000000000000   0.000000000000026   0.000000000000037   0.000000000000014   0.000000000000003
    } \HtwoErrors
    \addplot[cyan, mark=triangle,mark size=1.5, line width=0.02cm] table[x=t, y=p2] \HtwoErrors;
    \addplot[magenta, mark=pentagon,mark size=1.2, line width=0.02cm] table[x=t, y=p3] \HtwoErrors;
    \addplot[blue, mark=square, mark size=1.2,line width=0.02cm] table[x=t, y=p4] \HtwoErrors;
    \addplot[darkgreen, mark=diamond,mark size=1.5, line width=0.02cm] table[x=t, y=p5] \HtwoErrors;
\end{groupplot}
\end{tikzpicture}
\end{minipage}
\begin{minipage}{0.475\textwidth}
\begin{tikzpicture}
\begin{groupplot}[group style={group size=1 by 1},height=7cm,width=8cm, every axis label={font=\normalsize}, ylabel style={font=\footnotesize}]
        
    \nextgroupplot[xtick={0,0.2,0.4,0.6,0.8,1},
                ytick={0.1,0.001,0.00001,0.0000001,0.000000001,0.00000000001,0.0000000000001,0.000000000000001},
                xlabel={$t$},
                ymode=log,
                title=$|\mathcal{E}_{\Psi_{\bh}}(t) - \mathcal{E}_{\Psi_{\bh}}(0)|$,
                legend pos=south west, 
                legend style={nodes={scale=1, transform shape}},]
    \pgfplotstableread{
        t       p2              p3              p4              p5              
        
                   0                   0                   0                   0                   0
   0.015625000000000   0.000414811616055   0.000058313777224   0.000000602750105   0.000000248919764
   0.031250000000000   0.000322552939035   0.000070150990863   0.000001866100030   0.000000265756104
   0.046875000000000   0.000336518724559   0.000065308370525   0.000000737367920   0.000000255051423
   0.062500000000000   0.000336497203784   0.000066240542326   0.000001486084798   0.000000247688007
   0.078125000000000   0.000335515846916   0.000066435045689   0.000001079723281   0.000000261114660
   0.093750000000000   0.000335979153897   0.000066112646742   0.000001259540676   0.000000249038809
   0.109375000000000   0.000335830815792   0.000066325407381   0.000001201630644   0.000000257352916
   0.125000000000000   0.000335867724006   0.000066220797917   0.000001206275617   0.000000252650538
   0.140625000000000   0.000335860888054   0.000066262224369   0.000001218330041   0.000000254769850
   0.156250000000000   0.000335861484425   0.000066249574329   0.000001205341043   0.000000254165805
   0.171875000000000   0.000335861689121   0.000066251687605   0.000001214462126   0.000000254050614
   0.187500000000000   0.000335861550464   0.000066252391655   0.000001209349995   0.000000254400772
   0.203125000000000   0.000335861601179   0.000066251458760   0.000001211689101   0.000000254056565
   0.218750000000000   0.000335861587161   0.000066252048371   0.000001210887059   0.000000254307487
   0.234375000000000   0.000335861590160   0.000066251765753   0.000001210994541   0.000000254157394
   0.250000000000000   0.000335861589747   0.000066251874946   0.000001211119752   0.000000254230557
   0.265625000000000   0.000335861589731   0.000066251842875   0.000001210967905   0.000000254205144
   0.281250000000000   0.000335861589769   0.000066251847464   0.000001211079021   0.000000254206208
   0.296875000000000   0.000335861589752   0.000066251849840   0.000001211014915   0.000000254214408
   0.312500000000000   0.000335861589756   0.000066251847157   0.000001211045177   0.000000254204956
   0.328125000000000   0.000335861589753   0.000066251848784   0.000001211034219   0.000000254212367
   0.343750000000000   0.000335861589752   0.000066251848020   0.000001211036172   0.000000254207670
   0.359375000000000   0.000335861589750   0.000066251848305   0.000001211037414   0.000000254210122
   0.375000000000000   0.000335861589749   0.000066251848225   0.000001211035655   0.000000254209148
   0.390625000000000   0.000335861589749   0.000066251848235   0.000001211037003   0.000000254209321
   0.406250000000000   0.000335861589750   0.000066251848244   0.000001211036203   0.000000254209492
   0.421875000000000   0.000335861589752   0.000066251848238   0.000001211036593   0.000000254209244
   0.437500000000000   0.000335861589754   0.000066251848246   0.000001211036445   0.000000254209457
   0.453125000000000   0.000335861589756   0.000066251848247   0.000001211036477   0.000000254209318
   0.468750000000000   0.000335861589759   0.000066251848250   0.000001211036490   0.000000254209393
   0.484375000000000   0.000335861589763   0.000066251848251   0.000001211036465   0.000000254209364
   0.500000000000000   0.000335861589766   0.000066251848251   0.000001211036489   0.000000254209370
   0.515625000000000   0.000335861589769   0.000066251848249   0.000001211036462   0.000000254209363
   0.531250000000000   0.000335861589772   0.000066251848247   0.000001211036486   0.000000254209391
   0.546875000000000   0.000335861589773   0.000066251848242   0.000001211036471   0.000000254209316
   0.562500000000000   0.000335861589774   0.000066251848240   0.000001211036440   0.000000254209454
   0.578125000000000   0.000335861589773   0.000066251848233   0.000001211036587   0.000000254209241
   0.593750000000000   0.000335861589771   0.000066251848239   0.000001211036197   0.000000254209491
   0.609375000000000   0.000335861589768   0.000066251848230   0.000001211036998   0.000000254209321
   0.625000000000000   0.000335861589766   0.000066251848222   0.000001211035651   0.000000254209150
   0.640625000000000   0.000335861589763   0.000066251848305   0.000001211037411   0.000000254210125
   0.656250000000000   0.000335861589762   0.000066251848022   0.000001211036170   0.000000254207674
   0.671875000000000   0.000335861589762   0.000066251848789   0.000001211034217   0.000000254212373
   0.687500000000000   0.000335861589765   0.000066251847166   0.000001211045175   0.000000254204962
   0.703125000000000   0.000335861589763   0.000066251849850   0.000001211014911   0.000000254214414
   0.718750000000000   0.000335861589784   0.000066251847475   0.000001211079015   0.000000254206213
   0.734375000000000   0.000335861589751   0.000066251842889   0.000001210967896   0.000000254205149
   0.750000000000000   0.000335861589774   0.000066251874959   0.000001211119742   0.000000254230560
   0.765625000000000   0.000335861590192   0.000066251765767   0.000001210994529   0.000000254157396
   0.781250000000000   0.000335861587199   0.000066252048384   0.000001210887045   0.000000254307488
   0.796875000000000   0.000335861601219   0.000066251458771   0.000001211689086   0.000000254056564
   0.812500000000000   0.000335861550505   0.000066252391666   0.000001209349979   0.000000254400770
   0.828125000000000   0.000335861689161   0.000066251687615   0.000001214462110   0.000000254050610
   0.843750000000000   0.000335861484463   0.000066249574340   0.000001205341025   0.000000254165801
   0.859375000000000   0.000335860888088   0.000066262224380   0.000001218330025   0.000000254769846
   0.875000000000000   0.000335867724035   0.000066220797930   0.000001206275602   0.000000252650534
   0.890625000000000   0.000335830815817   0.000066325407397   0.000001201630630   0.000000257352912
   0.906250000000000   0.000335979153918   0.000066112646762   0.000001259540662   0.000000249038806
   0.921875000000000   0.000335515846932   0.000066435045713   0.000001079723268   0.000000261114657
   0.937500000000000   0.000336497203798   0.000066240542353   0.000001486084787   0.000000247688005
   0.953125000000000   0.000336518724573   0.000065308370557   0.000000737367908   0.000000255051422
   0.968750000000000   0.000322552939052   0.000070150990898   0.000001866100017   0.000000265756104
   0.984375000000000   0.000414811616076   0.000058313777261   0.000000602750091   0.000000248919765
   1.000000000000000   0.000000000000026   0.000000000000036   0.000000000000015   0.000000000000002     
    } \HoneErrors
    \addplot[cyan, mark=triangle, mark size=1.5, line width=0.02cm] table[x=t, y=p2] \HoneErrors;
    \addplot[magenta, mark=pentagon, mark size=1.2, line width=0.02cm] table[x=t, y=p3] \HoneErrors;
    \addplot[blue, mark=square, mark size=1.2, line width=0.02cm] table[x=t, y=p4] \HoneErrors;
    \addplot[darkgreen, mark=diamond,  mark size=1.5,line width=0.02cm] table[x=t, y=p5] \HoneErrors;
\end{groupplot}
\end{tikzpicture}
\end{minipage}
\caption{Relative errors of the energy (left plot) and~mass (right plot) functionals with maximal regularity splines in space and time for~$p=2$ ({\footnotesize{\textcolor{cyan}{$\triangle$}}} marker),~$p=3$ ({\footnotesize{\textcolor{magenta}{$\pentagon$}}} marker),~$p=4$ ({\footnotesize{\textcolor{blue}{\(\square\)}}} marker) and~$p=5$ ({\footnotesize{\textcolor{darkgreen}{\(\lozenge\)}}} marker).}
\label{fig:6}
\end{figure}

\paragraph{Conservation.}
Due to the rapid decay of the exact solution close to the 
boundary $\partial \Omega \times (0, T)= \{-3,3\} \times (0, T)$ (see~\cite[Fig.~8 (panel a)]{Gomez_Moiola:2024}), the energy~\eqref{eq:2.2} and the mass~\eqref{eq:2.3} are expected to be conserved on the continuous level. In Figure~\ref{fig:6}, we report the relative difference between the energy and mass with respect to their initial discrete values. We observe that both quantities remain approximately constant over time, and their relative difference becomes exactly zero at $T = 1$, as stated in Proposition~\ref{prop:2.3}.

\section{Conclusions}
In this paper, we analyze a space--time isogeometric method for the linear Schrödinger equation, combining splines of maximal regularity in time with conforming spatial discretizations. The method is proven to be unconditionally stable and preserves both mass and energy invariants at the final time. The use of splines in time with maximal regularity ensures high accuracy with fewer degrees of freedom than for standard polynomial spaces. Key advancements of this work include:
\begin{itemize}
\item a theoretical proof of unconditional stability via the analysis of Toeplitz-like system matrices, without relying on numerical verifications;
\item a connection to a first-order-in-time variational formulation for the wave equation, enabling a new theoretical stability proof for the latter;
\item numerical experiments demonstrating optimal convergence rates in space and time, as well as conservation of physical invariants.
\end{itemize}

A complete variational analysis to study the stability properties and convergence estimates of the proposed method remains an open problem. This is primarily due to the current inability to handle the larger support of maximal regularity splines, as well as the fact that~$\partial_t S^p_{h_{t}}(0,T) = S^{p-1}_{h_{t}}(0,T) \not\subset S^p_{h_{t}}(0,T)$.

\section*{Acknowledgments}
The authors would like to acknowledge the kind hospitality of the Centre International de Rencontres Math\'ematiques (CIRM), where part of this research was conducted under the project \textit{``Unconditionally stable conforming space--time methods for the Schr\"odinger equation"}. This research was partially supported by the Austrian Science Fund (FWF) project  \href{https://doi.org/10.55776/P33477}{10.55776/P33477} (MF).
Both authors are members of the Gruppo Nazionale Calcolo Scientifico-Istituto Nazionale di Alta Matematica (GNCS-INdAM).

\noindent

\bibliography{references}{}
\bibliographystyle{plain}

\end{document}